\long\def\symbolfootnote[#1]#2{\begingroup
\def\thefootnote{\fnsymbol{footnote}}\footnote[#1]{#2}\endgroup}
\newcommand{\C}{\mathfrak C}
\def\imod#1{\allowbreak\mkern10mu({\operator@font mod}\,\,#1)}
\newtheorem{theorem}{Theorem}[section]
\newtheorem{lemma}[theorem]{Lemma}
\newtheorem{proposition}[theorem]{Proposition}
\newtheorem*{theorem*}{Theorem}
\theoremstyle{definition}
\newtheorem{remark}[theorem]{Remark}
\numberwithin{equation}{section}
\newcommand{\ignore}[1]{}
\newcommand{\mynote}[1]{}
\begin{document}

\setcounter{section}{0}

\title{A Frobenius-type formula for compact Lie groups}
\author{Shripad M. Garge}
\address{Department of Mathematics, Indian Institute of Technology, Powai, Mumbai, 400 076 India.}
\email{shripad@math.iitb.ac.in}
\email{smgarge@gmail.com}
\author{Uday Bhaskar Sharma}
\address{School of Liberal Studies and Media, UPES, Kandoli, Dehradun - 248 007, India.}
\email{udaybsharmaster@gmail.com}

\subjclass[2010]{20G15, 20D06}
\keywords{compact Lie groups, Frobenius formula, character theory, commutator probability}

\begin{abstract}
Let $G$ be a group and $\alpha: G \times G \to G$ denote the commutator map. 
In the case of finite groups, Frobenius gave the formula to compute the cardinalities of the fibres $\alpha^{-1}(g)$ in terms of the character values $\chi(g)$ for irreducible characters $\chi$ of $G$.
We generalise this formula to compact Lie groups. 
Further, we connect this generalised formula to the commutator probability of the concerned groups. 
\end{abstract}

\maketitle

\section{Introduction}

Let $G$ be a group and let $\alpha: G \times G \to G$ be the commutator map, $\alpha(g, h) = ghg^{-1}h^{-1}$.
We are interested in studying the fibres $\alpha^{-1}(g)$ as $g$ varies in $G$.

In the case of finite groups, let $f: G \to \mathbb{C}$ denote the function that assigns to each $g \in G$ the number of elements in the fibre $\alpha^{-1}(g)$. 
Then $f$ is a class function on $G$, that is, it is constant on conjugacy classes of $G$, and hence it is a complex linear combination of irreducible characters of $G$. 
Frobenius discovered the following formula for $f$:
$$f(g) = |G| \sum_{\chi} \frac{\chi(g)}{\chi(1)} $$
where $\chi$ varies over all irreducible characters of $G$. 

In this paper, we generalise this formula to compact Lie groups. 
A compact group comes equipped with a unique normalised left-translation invariant measure, called the Haar measure. 
If $\mu$ denotes the Haar measure on a compact $G$ then the function defined by $g \mapsto \mu\left(\alpha^{-1}(g)\right)$ is a class function on $G$. 
This is our analogue of the function $f$ defined above and we would like to get a formula, {\it \`{a} la Frobenius}, for this function in terms of irreducible characters of $G$. 

There are some natural restrictions that we need to consider. 
First, among the compact groups, the representation theory of compact {\it Lie} groups is reasonably well-understood. 
Hence we derive the formula for compact Lie groups. 
There are, in general, infinitely many irreducible characters of such a group $G$. 
Hence we will need to make sense of the sum in the generalised formula. 
We consider it as a limit over certain finite subsets as the size of the finite subsets goes to infinity. 
We prove the result for the connected compact Lie group and the FC compact Lie groups. 
Each finite group is an FC compact Lie group hence our result is a generalisation of the beautiful formula of Frobenius. 

Let $G$ be a connected compact Lie group and $T$ be a maximal torus in $G$. 
Since $G$ is a union of conjugates of $T$, each irreducible character of $G$ is determined by its restriction on $T$. 
As characters of $T$, these characters are associated with finite sequences of non-negative integers, called weights. 
We will take the limit in the generalised formula as the sum of the integers in these weights goes to $\infty$. 
We refer the reader to section 3 for a more detailed discussion on this.
Let $Irr(G)_n$ denote the set of irreducible characters of $G$ whose weight-sum is less than or equal to $n$. 
Then we have

\begin{theorem}
Let $G$ be a connected, compact, Lie group with normalised Haar measure $\mu$.
Let $d$ denote the dimension of the group $G$ and let $r$ denote the rank of $G$. 
Let $Irr(G)_n$ denote the set of irreducible characters of $G$, whose weight-sum is less than or equal to $n$. 
Then for each $g \in G$, 
$$\mu\left(\alpha^{-1}(g)\right) = \lim_{n \to \infty} \frac{1}{|Irr(G)_n|^{d-r+1}} \sum_{\chi \in Irr(G)_n} \frac{\chi(g)}{\chi(1)} .$$
\end{theorem}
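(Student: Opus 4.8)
The plan is to read the right-hand side as a truncated Fourier expansion of the push-forward measure $\nu := \alpha_*(\mu\times\mu)$ on $G$, and to show that the prescribed normalisation extracts precisely the atomic mass $\nu(\{g\}) = (\mu\times\mu)\big(\alpha^{-1}(g)\big)$, which is the quantity $\mu(\alpha^{-1}(g))$ appearing on the left. The engine is the orthogonality computation underlying the Frobenius formula: for an irreducible character $\chi$ one has $\int_G \chi(xax^{-1}b)\,d\mu(x) = \chi(a)\chi(b)/\chi(1)$, whence $\int_{G\times G}\chi(\alpha(x,y))\,d(\mu\times\mu) = \tfrac{1}{\chi(1)}\int_G|\chi(y)|^2\,d\mu(y) = 1/\chi(1)$. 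Since $\nu$ is a conjugation-invariant probability measure, its character coefficients are $\int_G\chi\,d\nu = 1/\chi(1)$, so the finite sum $S_n(g) := \sum_{\chi\in Irr(G)_n}\chi(g)/\chi(1)$ is exactly the partial sum of the formal Fourier series of $\nu$ over the characters of weight-sum at most $n$.

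First I would record the two asymptotic inputs that govern $S_n$. By the Weyl dimension formula, $\chi_\lambda(1)$ is a polynomial in the highest weight $\lambda$ of degree equal to the number of positive roots, which is $(d-r)/2$ since $d = r + 2\,|\Phi^+|$; and the number of dominant weights of weight-sum at most $n$ grows like $c\,n^{r}$, so $|Irr(G)_n|\to\infty$ and $|Irr(G)_n|$ has order $n^{r}$. The case $g=1$ is then immediate: each summand equals $1$, so $S_n(1) = |Irr(G)_n|$ and the normalised quantity is $|Irr(G)_n|^{-(d-r)}$, which tends to $0$ when $d>r$ and equals $1$ when $d=r$, that is, exactly when $G$ is a torus.

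For $g\neq 1$ I would split according to the conjugacy type of $g$. Conjugating into $T$, the Weyl character formula shows that for regular $g$ the alternating numerator is bounded by $|W|$ while the Weyl denominator is a fixed non-zero constant, so $|\chi_\lambda(g)|$ is bounded independently of $\lambda$; hence $|\chi_\lambda(g)/\chi_\lambda(1)| = O\big(|\lambda|^{-(d-r)/2}\big)$ and a lattice-sum estimate gives $|S_n(g)| = o\big(|Irr(G)_n|^{\,d-r+1}\big)$. For singular $g$ the same formula, specialised along the walls fixed by the centraliser $Z_G(g)$, shows that $\chi_\lambda(g)$ grows only with degree $(\dim Z_G(g)-r)/2 < (d-r)/2$, so the decay and the bound persist. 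For central $g$ the ratio $\chi_\lambda(g)/\chi_\lambda(1)$ is a root of unity; when $d>r$ the trivial estimate $|S_n(g)|\le|Irr(G)_n|$ already forces the limit $0$, while in the torus case $d=r$ one instead invokes equidistribution of the weights (a cancelling exponential sum) to obtain the same limit $0$ for every $g\neq 1$.

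Finally I would match these limits with the left-hand side by a dimension count. For non-abelian $G$ the map $\alpha$ is real-analytic and non-constant (its image is the positive-dimensional subgroup $[G,G]$), so every fibre $\alpha^{-1}(g)$ is a proper real-analytic subvariety of the connected manifold $G\times G$ and is therefore $(\mu\times\mu)$-null; thus $\mu(\alpha^{-1}(g)) = 0$, agreeing with the computed limits, and the dimension $d+r < 2d$ of the commuting variety $\alpha^{-1}(1)$ confirms this at the identity. In the torus case $\alpha\equiv 1$, so $\mu(\alpha^{-1}(1)) = 1$ and $\mu(\alpha^{-1}(g)) = 0$ for $g\neq 1$, again matching term by term. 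The main obstacle is the uniform control of $\chi_\lambda(g)/\chi_\lambda(1)$ as $g$ ranges over the singular strata, together with the sharp lattice-point asymptotics for $|Irr(G)_n|$: once the decay estimate is made uniform on each stratum, the comparison of the growth order $n^{\,r(d-r+1)}$ against the genuinely smaller order of $S_n(g)$ closes the argument.
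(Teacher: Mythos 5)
Your proposal is correct and lands on the same endgame as the paper --- both sides vanish except at the identity of a torus --- but by a genuinely different route on each side of the identity. For the character-sum side, the paper never touches the Weyl character or dimension formulas: since a non-abelian connected $G$ has $d-r\geq 2$, it simply applies $|\chi(g)|\leq\chi(1)$ to get $\bigl|\sum_{\chi\in Irr(G)_n}\chi(g)/\chi(1)\bigr|\leq|Irr(G)_n|$, so the normalisation by $|Irr(G)_n|^{d-r+1}$ kills the limit for \emph{every} $g$ at once (Theorem 3.6), and the torus case is the same exponential-sum cancellation you describe (Proposition 3.7). Your finer Weyl-formula estimates are therefore superfluous for the statement as given --- and, as you concede at the end, they are not uniform in $\lambda$ near the walls, where $\chi_\lambda(1)$ fails to grow at the full degree $(d-r)/2$ --- but the trivial bound you already invoke for central $g$ closes that case for all $g$, so no gap results. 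For the measure side, the paper argues in two separate steps: for non-central $g$ it uses that the infinitely many conjugates of $g$ have disjoint, equal-measure fibres inside a finite-measure space (Lemma 3.1 and Proposition 3.2), and for central $g$ it embeds a simple factor in $SU(n)$ and shows by an explicit eigenvalue computation that the first coordinate of any $(a,b)\in\alpha^{-1}(g)$ is confined to a null set (Propositions 3.3--3.5). Your single observation --- that $\alpha$ is a non-constant real-analytic map on the connected manifold $G\times G$, so each fibre is locally contained in the zero set of a non-trivial real-analytic function and is hence $(\mu\times\mu)$-null --- is a clean, unified replacement for both, provided you record the two standard facts it rests on: local constancy of an analytic map on a connected manifold forces global constancy, and non-trivial real-analytic zero sets have Lebesgue measure zero. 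Net effect: your argument buys economy and generality on the measure-zero side at the cost of unnecessary (and not fully uniform) harmonic analysis on the character side.
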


In the next section, we recall the proof in the case of finite groups and prove the above theorem in the third section. 

A group is called an FC group if each of its conjugacy classes is finite. 
The finite groups and compact abelian groups are examples of FC groups. 
We prove in the fourth section that a compact Lie FC group is essentially built up from finite groups and compact abelian groups. 
We prove the following general version of the above theorem for compact FC Lie groups. 
\begin{theorem}
Let ${\displaystyle G \cong \frac{G^0\times \Delta}{N}}$ be a compact Lie FC group. 
Then for $g_0\in G$, we have 
$$\mu^2 \big(\alpha^{-1}(g_0)\big) = \frac{|N|}{|\Delta|}\lim_{t \rightarrow \infty}\left(\frac{1}{|Irr(G^0)_t|}\sum_{\chi \in Irr(G)_t}\frac{\chi(g_0)}{\chi(1)}\right).$$
\end{theorem}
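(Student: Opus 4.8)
The plan is to exploit the special structure of a compact Lie FC group, established in the structure theorem of Section~4, namely that the identity component $G^0$ is a \emph{central} torus while $\Delta$ is finite and $N$ is a finite central subgroup of $G^0\times\Delta$. This reduces the Haar-measure computation to an honest counting problem on $\Delta$, which is then matched against the character sum through the representation theory of the central extension $1\to G^0\to G\to G/G^0\to 1$. As a sanity check, specialising the connected-case theorem to the torus $G^0$ gives exponent $d-r+1=1$, consistent with the single power of $|\Irr(G^0)_t|$ appearing here.

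First I would pass to the cover $G^0\times\Delta$. Let $\pi\colon G^0\times\Delta\to G$ be the quotient by $N$; since $N$ is finite and $\pi$ is a surjective homomorphism of compact groups, the pushforward of the normalised Haar measure $\mu_{G^0}\times\mu_\Delta$ (with $\mu_\Delta$ uniform) is exactly $\mu$. Lifting $\mathbf{1}[\alpha(g,h)=g_0]$ along $\pi\times\pi$ and using that $G^0$ is central—so that $[(a,\delta),(b,\epsilon)]=(1,[\delta,\epsilon])$ is independent of the torus coordinates $a,b$—the two torus integrations each contribute a factor $1$, giving
$$\mu^2\big(\alpha^{-1}(g_0)\big)=\frac{1}{|\Delta|^2}\,\#\{(\delta,\epsilon)\in\Delta^2:\pi(1,[\delta,\epsilon])=g_0\}.$$
Fixing a lift $(s_0,\rho_0)$ of $g_0$, the condition unwinds to $(s_0^{-1},[\delta,\epsilon]\rho_0^{-1})\in N$, which forces $s_0$ into the finite image $N_1$ of $N$ in $G^0$ and pins $[\delta,\epsilon]$ to a single $c_0\in\Delta$. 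The classical Frobenius formula of Section~2 applied to $\Delta$ then evaluates the count, so that the left-hand side equals $\tfrac{1}{|\Delta|}\,\mathbf{1}[s_0\in N_1]\sum_{\sigma\in\Irr(\Delta)}\sigma(c_0)/\sigma(1)$.

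Next I would analyse the right-hand side. Centrality of $G^0$ forces every $\chi\in\Irr(G)$ to be the descent of some $\psi\boxtimes\sigma$ with $\psi\in\widehat{G^0}$ a weight and $\sigma\in\Irr(\Delta)$, trivial on $N$; moreover $\chi(g_0)/\chi(1)=\psi(s_0)\,\sigma(\rho_0)/\sigma(1)$ and the weight-sum of $\chi$ is that of $\psi$. Thus $\Irr(G)_t$ is fibred over $\Irr(\Delta)$, and for each $\sigma$ the admissible weights form a coset of the annihilator $N_1^{\perp}\subseteq\widehat{G^0}$, a sublattice of index $|N_1|$. The inner sum $\sum_{\psi}\psi(s_0)$ is a truncated exponential sum on the torus, and here lies the crux: I would show by a Dirichlet-kernel estimate that $\tfrac{1}{|\Irr(G^0)_t|}\sum_{|\psi|\le t}\psi(s)\to\mathbf{1}[s=1]$, and that averaging over a coset of $N_1^{\perp}$ contributes the density factor $1/|N_1|$ while localising to $s_0\in N_1$, yielding $\tfrac{1}{|N_1|}\mathbf{1}[s_0\in N_1]\,\psi(s_0)$ with $\psi(s_0)$ the well-defined common value on $N_1$.

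Finally I would assemble the pieces. Using $|N_1|=|N|$ and the descent relation $\psi(s_0)=\lambda_\sigma(\phi(s_0))^{-1}$ linking the weight to the central character $\lambda_\sigma$ of $\sigma$, the term-by-term identity $\sigma(\rho_0)\psi(s_0)=\sigma(c_0)$ holds, whence
$$\frac{|N|}{|\Delta|}\lim_{t\to\infty}\frac{1}{|\Irr(G^0)_t|}\sum_{\chi\in\Irr(G)_t}\frac{\chi(g_0)}{\chi(1)}=\frac{\mathbf{1}[s_0\in N_1]}{|\Delta|}\sum_{\sigma}\frac{\sigma(c_0)}{\sigma(1)},$$
which matches the left-hand side. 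The main obstacle I anticipate is the limit step: controlling the truncated character sums uniformly and justifying the index-$|N_1|$ density over each coset, together with the bookkeeping of the constraints from a general central $N$—in particular when the projections of $N$ to $G^0$ and to $\Delta$ fail to be injective—which requires care beyond the clean graph case sketched above.
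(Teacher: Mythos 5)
Your proposal is correct in outline and arrives at the right identity, but it takes a genuinely different route from the paper. The paper also lifts to $H=G^0\times\Delta$, but it then writes $q^{-1}(\alpha^{-1}(g_0))$ as a disjoint union over $n\in N$ of the fibres $\{(x,y)\in H^2:[x,y]=h_0n\}$, evaluates the measure of each by its Lemma 4.1 (the case $A\times T$, which combines the finite Frobenius formula with the torus limit of Proposition 3.7), and then converts the resulting sum over $Irr(H)_t$ into a sum over $Irr(G)_t$ by averaging the central characters: $\sum_{n\in N}\lambda_\chi(n)$ equals $|N|$ or $0$ according as $\chi$ is trivial on $N$ or not, which selects exactly the characters descending to $G$. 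The paper never produces a closed form for either side; it transforms one limit into the other. You instead evaluate both sides independently in closed form: on the left you use centrality of the torus to collapse the Haar integral to a count on $\Delta$ and apply finite Frobenius, getting $\frac{1}{|\Delta|}\,\mathbf{1}[s_0\in N_1]\sum_{\sigma}\sigma(c_0)/\sigma(1)$; on the right you fibre $Irr(G)_t$ over $Irr(\Delta)$ and invoke an equidistribution statement for a coset of $N_1^{\perp}$. Your version is more informative (it exhibits the actual value, and it handles $g_0$ outside the commutator subgroup, where both sides vanish via the indicator $\mathbf{1}[s_0\in N_1]$ --- a case the paper's proof quietly restricts away from), but it carries one real obligation the paper avoids: the Dirichlet-kernel estimate over a coset of $N_1^{\perp}$. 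This is not a gap --- it follows from Proposition 3.7 by expanding the coset indicator as $\frac{1}{|N_1|}\sum_{s\in N_1}\psi(s)\overline{\psi_0(s)}$ and applying the full-lattice limit to each translate --- but it must be written out, since the paper only proves the full-lattice case. Finally, the difficulty you flag about non-injective projections of $N$ does not arise: Theorem 4.2 of the paper shows $N$ embeds diagonally as $n\mapsto(n,n^{-1})$, so both projections are injective and $|N_1|=|N|$, exactly as your bookkeeping requires.
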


In the next section, we deal with the remaining class of compact Lie groups for which the theorem is not yet discussed. 
These are the compact Lie groups which are not FC groups. 
Among these, we look at only those groups $G$ such that the FC centre of $G$ is an open subgroup of $G$. 
This is because these are the only groups for which the trivial fibre of the commutator map has a positive measure.
We then prove that each fibre of the commutator map for $G$ has the same measure as that of the commutator map for the FC centre. 

In the last section, we connect Theorem 1.1 with the commutator probability.

\section*{Acknowledgements}
We thank Dipendra Prasad for many useful discussions. 

\section{Finite groups}
We recall the proof of Frobenius' theorem in this section.
We fix a finite group $G$ for this section. 
Let us begin with a lemma.
\begin{lemma}
For $g, h\in G$, and an irreducible character $\chi$ of $G$ 
$$\chi(g)\chi(h) = \frac{\chi(1)}{|G|}\sum_{z\in G}\chi(gzhz^{-1}).$$
\end{lemma}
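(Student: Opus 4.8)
The plan is to realise $\chi$ as the character of an irreducible representation $\rho \colon G \to GL(V)$ and to exploit Schur's lemma applied to a suitable averaged operator. First I would form the operator $A = \sum_{z \in G} \rho(zhz^{-1})$ acting on $V$. The crucial observation is that $A$ is $G$-invariant: for any $g' \in G$, conjugating by $\rho(g')$ and reindexing the sum via the substitution $w = g'z$ gives $\rho(g')\, A\, \rho(g')^{-1} = \sum_{w \in G} \rho(whw^{-1}) = A$. Hence $A$ commutes with every operator in the image of $\rho$.

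Since $\rho$ is irreducible, Schur's lemma then forces $A = \lambda\, I$ for some scalar $\lambda$. To pin down $\lambda$, I would take traces of both sides. On one hand $\operatorname{tr}(A) = \lambda\, \chi(1)$, since $\dim V = \chi(1)$; on the other hand, using that $\chi$ is a class function, $\operatorname{tr}(A) = \sum_{z \in G} \chi(zhz^{-1}) = |G|\,\chi(h)$. Comparing the two expressions yields $\lambda = |G|\,\chi(h)/\chi(1)$.

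Finally I would evaluate the sum on the right-hand side of the asserted identity directly, writing $\chi(gzhz^{-1}) = \operatorname{tr}\big(\rho(g)\,\rho(zhz^{-1})\big)$, pulling $\rho(g)$ out of the sum, and invoking $A = \lambda I$: this gives $\sum_{z \in G} \chi(gzhz^{-1}) = \operatorname{tr}\big(\rho(g)\, A\big) = \lambda\, \chi(g)$. Substituting the computed value of $\lambda$ and rearranging produces exactly $\chi(g)\chi(h) = \tfrac{\chi(1)}{|G|}\sum_{z \in G}\chi(gzhz^{-1})$. The only step requiring genuine care is the reindexing that establishes the $G$-invariance of $A$, on which the application of Schur's lemma rests; the remaining steps are routine trace manipulations.
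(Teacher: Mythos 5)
Your proposal is correct and follows essentially the same route as the paper: both form the averaged operator $\sum_{z\in G}\rho(zhz^{-1})$ (the paper just carries the normalising factor $\chi(1)/|G|$ from the start), identify it as a scalar via Schur's lemma and a trace computation, and then take the trace of its product with $\rho(g)$. No substantive difference.
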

\begin{proof}
Let $\rho$ be the irreducible representation of $G$, whose character is $\chi$, and let $V$ be the representation space. 
For a fixed $h \in G$, we define an endomorphism $f_h$ of $V$ as follows:
$$f_h = \frac{\chi(1)}{|G|} \sum_{z\in G} \rho(zhz^{-1}) .$$ 
One sees that $f_h\rho(t) = \rho(t)f_h$ for every $t \in G$. 
By Schur's lemma, $f_h = \lambda_h I_V$, for some constant $\lambda_h \in \mathbb{C}$. 
One computes the constant $\lambda_h$ by taking the trace of $f_h$ which gives:
$$Tr(f_h) = \frac{\chi(1)}{|G|} \sum_{z \in G} \chi(zhz^{-1}) = \chi(1)\chi(h) .$$
On the other hand $Tr(f_h) = \lambda_h \chi(1)$, hence we get $\lambda_h = \chi(h)$ and $f_h = \chi(h) I_V$.

Further, we consider the endomorphism $\rho(g)f_h$ of $V$ whose trace computation gives 
$$\chi(h)\chi(g) = Tr\left(\rho(g)f_h\right) = \frac{\chi(1)}{|G|} \sum_{z \in G} \chi(gzhz^{-1}) .$$
\end{proof}

\begin{theorem}
Let $G$ be a finite group and let $f(g)$ denote the number of elements $(x, y) \in G \times G$ with $[x, y] = g$. 
Then 
$$f(g) = |G| \sum_{\chi} \frac{\chi(g)}{\chi(1)}$$
where $\chi$ varies over the set of irreducible characters of $G$. 
\end{theorem}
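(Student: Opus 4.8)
The plan is to exploit the fact that $f$ is a class function and to expand it in the orthonormal basis of irreducible characters, reading off the coefficients with the help of Lemma~2.1. First I would verify that $f$ is genuinely a class function: if $[x,y] = g$ then $[axa^{-1}, aya^{-1}] = aga^{-1}$, so conjugation by $a$ gives a bijection between the fibres $\alpha^{-1}(g)$ and $\alpha^{-1}(aga^{-1})$, whence $f(aga^{-1}) = f(g)$. Consequently $f = \sum_\chi c_\chi \chi$, summed over the irreducible characters $\chi$ of $G$, with $c_\chi = \langle f, \chi \rangle = \frac{1}{|G|}\sum_{g \in G} f(g)\overline{\chi(g)}$. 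The whole theorem then reduces to showing that $c_\chi = |G|/\chi(1)$ for every irreducible $\chi$, since evaluating $f = \sum_\chi c_\chi \chi$ at $g$ yields exactly the claimed formula.

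To compute $c_\chi$, I would rewrite the defining sum by grouping the pairs that constitute each fibre: since $f(g)$ counts pairs $(x,y)$ with $[x,y]=g$, we have $\sum_{g} f(g)\overline{\chi(g)} = \sum_{x,y \in G} \overline{\chi([x,y])}$. Because $\overline{\chi(h)} = \chi(h^{-1})$ and $[x,y]^{-1} = [y,x]$, relabelling the summation variables shows this double sum equals $\sum_{x,y}\chi([x,y])$, so it suffices to evaluate $\sum_{x,y}\chi(xyx^{-1}y^{-1})$. Fixing $y$ and using the cyclic invariance $\chi(ab) = \chi(ba)$, I would rewrite the inner sum as $\sum_{x} \chi(y^{-1}\,x\,y\,x^{-1})$, which is precisely the shape $\sum_{z}\chi(gzhz^{-1})$ appearing in Lemma~2.1, taking the lemma's $g$ to be $y^{-1}$, its $h$ to be $y$, and its summation index $z$ to be $x$.

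Applying Lemma~2.1 then collapses the inner sum to $\frac{|G|}{\chi(1)}\chi(y^{-1})\chi(y)$, and summing over $y$ gives $c_\chi = \frac{1}{\chi(1)}\sum_{y \in G}\chi(y^{-1})\chi(y)$. The final step is the orthogonality relation $\sum_{y}\chi(y^{-1})\chi(y) = \sum_{y}|\chi(y)|^2 = |G|\,\langle\chi,\chi\rangle = |G|$, valid precisely because $\chi$ is irreducible; this yields $c_\chi = |G|/\chi(1)$ and completes the argument. I expect the only point requiring real care to be the manipulation in the second paragraph: correctly matching the commutator $xyx^{-1}y^{-1}$, after cyclic rearrangement, to the conjugation pattern $gzhz^{-1}$ of the lemma, and tracking the complex conjugate so that no spurious inversion survives. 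Everything after that invocation is a routine application of the orthonormality of irreducible characters.
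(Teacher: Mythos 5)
Your proof is correct and takes essentially the same route as the paper's: expand the class function $f$ in irreducible characters, identify the coefficient by applying Lemma~2.1 with $h=g^{-1}$, and close with the orthogonality relation $\sum_x \chi(x)\chi(x^{-1})=|G|$. The only cosmetic difference is the direction of the computation --- you unfold $c_\chi$ into the double sum $\sum_{x,y}\chi([x,y])$ and collapse it via the lemma, while the paper starts from $|G|=\sum_x\chi(x)\chi(x^{-1})$ and expands each term --- so the two arguments coincide.
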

\begin{proof}
We have $f(g) =|\{(x,y) \in G^2\mid [x,y] = g\}|$, the cardinality of the fibre $\alpha^{-1}(g)$ where $\alpha: G \times G \to G$ is the commutator map. 
The function $f$ is a class function, hence it is a complex linear combination of irreducible characters of $G$. 
Let $f = \sum_{\chi} s_\chi \chi$ where 
$$s_\chi = \frac{1}{|G|}\sum_{g\in G} f(g)\chi(g) .$$ 
Now, from the lemma above, we have 
$$\chi(x)\chi(x^{-1})= \frac{\chi(1)}{|G|}\sum_{z\in G}\chi(xzx^{-1}z^{-1}) .$$ 
With $\chi(x^{-1}) = \overline{\chi(x)}$ for any $x\in G$, we have:
$$|G| = \sum_{x \in G}\chi(x)\chi(x^{-1}) = \frac{\chi(1)}{|G|}\sum_{x,z\in G}\chi([x,z])$$
$$= \frac{\chi(1)}{|G|}\sum_{g\in G} f(g)\chi(g) 
= \chi(1)s_\chi .$$
This gives $$s_\chi = \frac{|G|}{\chi(1)}$$ 
and hence 
$$|\{(x,y) \in G^2 \mid [x,y] = g\}| = f(g) = |G|\sum_{\chi \in \hat{G}}\frac{\chi(g)}{\chi(1)} .$$
\end{proof}

\section{Connected compact Lie groups}

In this section, we prove the main theorem for connected compact Lie groups. 
It turns out that if a connected compact Lie group $G$ is not abelian then each fibre of the commutator map has measure zero. 
We prove that both the sides of the equality in the main theorem equal zero in this case and that proves that the theorem holds in this case. 

We first prove a basic lemma in the general case of compact groups. 
\begin{lemma}
Let $G$ be a compact group with the normalised Haar measure $\mu$. 
Let $\alpha: G \times G \to G$ denote the commutator map. 
If $g$ and $h$ are conjugate elements of $G$ then $\mu(\alpha^{-1}(g)) = \mu(\alpha^{-1}(h))$. 
\end{lemma}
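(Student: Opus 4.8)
The plan is to exhibit an explicit measure-preserving bijection between the two fibres. Writing $h = aga^{-1}$ for some $a \in G$, I would consider the diagonal conjugation map $\Phi \colon G \times G \to G \times G$ defined by $\Phi(x,y) = (axa^{-1}, aya^{-1})$.

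First I would record the elementary commutator identity $[axa^{-1}, aya^{-1}] = a[x,y]a^{-1}$, which follows at once by expanding both sides. Consequently, if $(x,y) \in \alpha^{-1}(g)$, that is $[x,y] = g$, then $[axa^{-1}, aya^{-1}] = aga^{-1} = h$, so $\Phi(x,y) \in \alpha^{-1}(h)$. Since $\Phi$ is a bijection of $G \times G$ with inverse $(x,y) \mapsto (a^{-1}xa, a^{-1}ya)$, which by the same identity carries $\alpha^{-1}(h)$ back into $\alpha^{-1}(g)$, the restriction of $\Phi$ is a bijection $\alpha^{-1}(g) \to \alpha^{-1}(h)$.

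It then remains to check that $\Phi$ preserves the normalised product Haar measure on $G \times G$, and for this I would argue that conjugation $c_a \colon x \mapsto axa^{-1}$ preserves the Haar measure $\mu$ on $G$. Writing $c_a$ as the composition of left translation by $a$ and right translation by $a^{-1}$, left-invariance of $\mu$ is immediate from its definition, while right-invariance is exactly the statement that $G$ is unimodular. This is a standard fact for compact groups, since the continuous homomorphism from $G$ to $\mathbb{R}_{>0}$ given by the modular function has image a compact subgroup of $\mathbb{R}_{>0}$, hence is trivial. Thus $c_a$ preserves $\mu$, and $\Phi = c_a \times c_a$ preserves $\mu \times \mu$, the Haar measure of $G \times G$. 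Applying this to the bijection above gives $\mu(\alpha^{-1}(g)) = \mu(\alpha^{-1}(h))$.

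The only point that requires any care is the unimodularity of $G$; every other step is a routine verification. For completeness I would also note in passing that $\alpha^{-1}(g)$ is a closed, and hence measurable, subset of $G \times G$, being the preimage of the point $g$ under the continuous map $\alpha$, so that all the measures above are well defined.
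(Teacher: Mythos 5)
Your proof is correct and follows essentially the same route as the paper: conjugate the fibre $\alpha^{-1}(g)$ onto $\alpha^{-1}(h)$ via the diagonal inner automorphism and invoke the fact that inner automorphisms of a compact group preserve Haar measure. You merely spell out the details (unimodularity, measurability of the fibres) that the paper leaves implicit.
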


\begin{proof}
Let $G$ be a compact group and $g, h \in G$ be conjugate elements. 
If $g = a^{-1}ha$ then $(g_1, g_2) \in \alpha^{-1}(g)$ if and only if $(ag_1a^{-1}, ag_2a^{-1}) \in \alpha^{-1}(h)$. 
Since inner automorphisms preserve Haar measure of (measurable) sets, we get $\mu(\alpha^{-1}(g)) = \mu(\alpha^{-1}(h))$.
\end{proof}

Now, we turn to the case of connected compact groups. 
We are still in a reasonably general setup as we are not restricting to Lie groups yet. 

\begin{proposition}\label{PropNonCent}
Let $G$ be a connected, compact group with the normalised Haar measure $\mu$. 
If $g \in G$ is a non-central element then the $\mu(\alpha^{-1}(g)) = 0$.
\end{proposition}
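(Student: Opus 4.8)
The plan is to play the preceding lemma, which guarantees that conjugate elements have fibres of equal measure, against the normalisation of the (product) Haar measure, exploiting the fact that a non-central element of a \emph{connected} group necessarily has an infinite conjugacy class. Throughout, $\mu$ on $G \times G$ means the product measure $\mu \times \mu$, which is a probability measure since $\mu$ is normalised.

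First I would establish that the conjugacy class $C_g = \{aga^{-1} \mid a \in G\}$ of a non-central element $g$ is infinite. Write $C_G(g) = \{a \in G \mid ag = ga\}$ for the centraliser; it is closed, being cut out by the continuous condition $aga^{-1} = g$, and $C_g$ is in bijection with the coset space $G/C_G(g)$. Suppose for contradiction that $C_g$ is finite, so that $C_G(g)$ has finite index. Then $G \setminus C_G(g)$ is a finite union of (closed) cosets of $C_G(g)$, hence closed, so $C_G(g)$ is open and therefore clopen. Since $G$ is connected and $C_G(g)$ contains the identity, this forces $C_G(g) = G$, i.e. $g \in Z(G)$, contradicting non-centrality. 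Thus $C_g$ is infinite. This is exactly the step where connectedness enters in an essential way, and I expect it to be the only genuine subtlety in the argument; the rest is soft.

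Next I would run an elementary counting argument against finite additivity. By the preceding lemma the fibre measure is constant on $C_g$; call this common value $m = \mu(\alpha^{-1}(g))$. Since $\alpha$ is continuous, each fibre $\alpha^{-1}(h)$ is closed, hence measurable, and fibres over distinct points are pairwise disjoint. Choosing $n$ distinct elements $h_1, \dots, h_n$ of the infinite set $C_g$, the sets $\alpha^{-1}(h_1), \dots, \alpha^{-1}(h_n)$ are pairwise disjoint subsets of $G \times G$, each of measure $m$. Finite additivity together with $(\mu \times \mu)(G \times G) = 1$ then yields $nm \le 1$, and letting $n \to \infty$ (which is possible precisely because $C_g$ is infinite) forces $m = 0$, i.e. $\mu(\alpha^{-1}(g)) = 0$, as desired.
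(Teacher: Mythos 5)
Your proof is correct and follows the same overall strategy as the paper's: use Lemma 3.1 to see that all fibres over the conjugacy class of $g$ share a common measure $m$, note that these fibres are pairwise disjoint subsets of the probability space $G \times G$, and conclude $m = 0$ because the conjugacy class is infinite. The one point where you genuinely diverge is the justification that the conjugacy class of a non-central element is infinite. The paper argues via dimensions: $Z_G(g)$ is a proper subgroup of the connected group $G$, hence of strictly smaller dimension, so the conjugacy class $G/Z_G(g)$ has positive dimension. You instead use the purely topological fact that a closed subgroup of finite index is open (its complement being a finite union of closed cosets), and an open subgroup of a connected group is the whole group, so a proper centraliser cannot have finite index. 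Your version is more elementary and applies verbatim to any connected topological group, with no appeal to a notion of dimension; the paper's version, though stated for general connected compact groups, implicitly leans on dimension theory that is cleanest in the Lie setting. You are also slightly more careful than the paper in recording that the fibres are closed, hence measurable. In all other respects the two arguments coincide.
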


\begin{proof}
As $g$ is a non-central element of $G$ its centraliser, $Z_G(g)$, is a proper subgroup of $G$.
Since $G$ is connected, the dimension of $Z_G(g)$ is strictly less than that of $G$ and hence the dimension of the conjugacy class of $g$ is non-zero, being equal to $\dim(G) - \dim(Z_G(g))$. 
In particular, $g$ has infinitely many conjugates in $G$. 

If $h$ is a conjugate of $g$ then, by the above lemma, $\mu(\alpha^{-1}(g)) = \mu(\alpha^{-1}(h))$. 
The sets $\alpha^{-1}(h)$, as $h$ varies over conjugates of $g$, are all disjoint and their union is a subset of $G$, hence its measure is finite.
If $\mu(\alpha^{-1}(g))$ is non-zero, then we get a contradiction to the finiteness of $\mu \left(\cup_h \alpha^{-1}(h)\right)$. 
Hence $\mu(\alpha^{-1}(g)) = 0$.
\end{proof}

\begin{remark}
We note that the connectedness of $G$ is a necessary condition in the proposition above. 
If $G$ has a finite number of connected components then every element of $G$ may have only finitely many conjugates. 
Take, for instance, the group $G = S^1 \times H$ where $H$ is a finite {\em non-abelian} group. 
The connected component of identity, $S^1$, is also a central subgroup, so $S^1 \subseteq Z_G(g)$ for every $g \in G$. 
Therefore $\dim(Z_G(g)) = \dim(G)$ for any $g \in G$ and each conjugacy class in $G$ has dimension zero. 
If $g = (t, h) \in S^1 \times H$ then the conjugacy class of $g$ in $G$ is determined by the conjugacy class of $h$ in $H$ and is hence finite. 
\end{remark}

We now turn our attention to central elements of compact, connected Lie groups $G$. 

\begin{proposition}
Let $g \in SU(n)$ be a central element. 
Then $\mu(\alpha^{-1}(g)) = 0$. 
\end{proposition}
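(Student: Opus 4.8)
The plan is to realise $\alpha^{-1}(g)$ as a subset of the zero locus of a single nonzero real-analytic function on the connected real-analytic manifold $G\times G = SU(n)\times SU(n)$, and then to invoke the fact that such a zero locus is a null set for the product Haar measure $\mu\times\mu$ (which is what $\mu(\alpha^{-1}(g))$ denotes here, exactly as in the second theorem where it is written $\mu^2$). Recall first that, by Schur's lemma applied to the standard irreducible representation, the centre of $SU(n)$ consists precisely of the scalar matrices $\zeta I$ with $\zeta^n = 1$. Hence it suffices to treat $g = \zeta I$ for an $n$-th root of unity $\zeta$, and we may assume $n\geq 2$.

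First I would introduce the function $\phi\colon G\times G\to\mathbb{C}$ defined by $\phi(x,y)=\operatorname{tr}\big(xyx^{-1}y^{-1}\big)=\operatorname{tr}(\alpha(x,y))$. Since multiplication, inversion and the trace are real-analytic on the Lie group $SU(n)$, the function $\phi$ is real-analytic, and $G\times G$ is connected because $SU(n)$ is. The key elementary observation is that a matrix $M\in SU(n)$ satisfies $\operatorname{tr}(M)=n\zeta$ if and only if $M=\zeta I$: being unitary, $M$ is diagonalisable with eigenvalues on the unit circle, and the modulus of their sum can equal $n$ only when they all coincide, forcing them to equal $\zeta$. Consequently
\[
\alpha^{-1}(\zeta I) = \{(x,y)\in G\times G : \phi(x,y)=n\zeta\},
\]
so in particular $\alpha^{-1}(\zeta I)\subseteq\{\phi=n\zeta\}$. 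Next I would check that $\phi$ is not identically equal to the constant $n\zeta$, so that $\phi-n\zeta$ is a \emph{nonzero} real-analytic function: for $\zeta\neq 1$ this is immediate from $\phi(I,I)=n\neq n\zeta$, while for $\zeta=1$ it follows by taking any non-commuting pair $x,y\in SU(n)$ (such a pair exists since $n\geq 2$), for which $[x,y]\neq I$ and hence $\phi(x,y)\neq n$.

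Finally, a nonzero real-analytic function on a connected real-analytic manifold has a zero set of measure zero; applying this to $\phi-n\zeta$ on $G\times G$ shows that $\{\phi=n\zeta\}$, and therefore $\alpha^{-1}(g)$, is $(\mu\times\mu)$-null, which is the claim. The main (indeed the only) subtle point is this last step, and in writing it up I would make explicit two standard facts: that the product Haar measure on $G\times G$ is mutually absolutely continuous with the Riemannian volume measure, so that ``null'' is unambiguous; and that a real-analytic function vanishing on a set of positive measure on a connected manifold vanishes identically (so its nonvanishing forces a null zero set). Everything else — the identification of the centre and the eigenvalue argument pinning down $\operatorname{tr}(M)=n\zeta$ — is routine.
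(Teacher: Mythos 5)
Your proof is correct, but it takes a genuinely different route from the paper's. The paper argues by explicit linear algebra: after simultaneous conjugation it diagonalises $a$, derives from $aba^{-1}=\zeta b$ the relations $a_i=\zeta a_j$ whenever $b_{ij}\neq 0$, and (for $\zeta\neq 1$) uses the determinant condition to show the eigenvalues of $a$ lie in a finite set, whence measure zero; the case $\zeta=1$ is handled separately. You instead trap $\alpha^{-1}(\zeta I)$ as the exact level set $\{\operatorname{tr}\circ\alpha=n\zeta\}$ of a real-analytic function on the connected manifold $SU(n)\times SU(n)$, check it is not the constant level set, and invoke the standard fact that a non-identically-vanishing real-analytic function has a null zero locus. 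Your observation that $\operatorname{tr}(M)=n\zeta$ forces $M=\zeta I$ for unitary $M$ is correct and is the crux. Two remarks. First, your argument is more uniform: it treats $\zeta=1$ and $\zeta\neq 1$ identically, whereas the paper's $\zeta=1$ case (which asserts that $a$ must be scalar when $a$ and $b$ merely commute) is not justified as written, so your route actually repairs a weak spot. Second, to apply the real-analytic zero-set lemma literally you should pass from the complex-valued $\phi-n\zeta$ to the real-valued $|\phi-n\zeta|^2$, which has the same zero set and is still real-analytic and not identically zero; this is cosmetic. The one thing the paper's computation buys that yours does not is the explicit eigenvalue structure of $a$, which the paper reuses in the subsequent proposition for general non-abelian factors; but your soft argument transfers there as well, since any compact connected Lie group is a connected real-analytic manifold on which the same trace function is real-analytic and non-constant.
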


\begin{proof}
Let $a, b \in SU(n)$ such that $aba^{-1}b^{-1} = g$. 
We note that since $g$ is a central element, $g = \zeta I$ where $\zeta$ is a root of unity. 
Further, again since $g$ is central, we can modify $(a, b)$ by simultaneous conjugation and thus assume that $a$ is diagonal with diagonal entries $a_1, \dots, a_n$. 
If $b = [b_{ij}]$ then the equality $aba^{-1} = \zeta b$ gives us $a_i a_j^{-1} b_{ij} = \zeta b_{ij}$. 
In particular, we get that $a_i= \zeta a_j$ whenever $b_{ij} \ne 0$. 

If $\zeta \ne 1$ then all $b_{ii}$ are zero. 
Further, up to a permutation of $\{1, \dots, n\}$, $a_{i+1} = \zeta a_i$. 
Since the determinant of $a$ is equal to $1$, we get that 
$$a_1 a_2 \cdots a_n = \zeta^{n(n-1)/2}a_1^n = 1 .$$
This implies that the eigenvalues of $a$ have a finite choice and hence the set $\alpha^{-1}(g)$ has measure zero. 

If $\zeta = 1$ then $a$ is a scalar matrix whose eigenvalues are $n$-th roots of unity and hence the set $\alpha^{-1}(g)$ has measure zero. 
\end{proof}

\begin{proposition}
Let $G$ be a non-abelian, connected, compact Lie group and let $g \in G$ be a central element. 
The set $\alpha^{-1}(g)$ has measure zero in $G$. 
\end{proposition}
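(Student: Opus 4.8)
The plan is to exhibit $\alpha^{-1}(g)$ as a proper real-analytic subset of the connected real-analytic manifold $G \times G$ and then to invoke the standard principle that such a subset is Haar-null. The point of doing it this way is to get the conclusion uniformly for every non-abelian connected compact Lie group, without a case analysis of the simple factors and without re-running the diagonalisation of the $SU(n)$ proposition.

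First I would fix a faithful finite-dimensional representation $G \hookrightarrow U(N) \subseteq GL_N(\mathbb{C})$, which exists because every compact Lie group is a matrix group. In these matrix coordinates the commutator map $\alpha(a,b) = aba^{-1}b^{-1}$ is given entrywise by products of the entries of $a,b$ and their inverses, and inversion is rational with non-vanishing denominator $\det$; hence $\alpha$ is real-analytic on $G \times G$. Writing $g$ as a matrix, each real and imaginary part of an entry of $\alpha(a,b) - g$ is a real-analytic function $f_k : G \times G \to \mathbb{R}$, and
$$\alpha^{-1}(g) = \bigcap_k \{\, f_k = 0 \,\}.$$
Thus $\alpha^{-1}(g)$ is a real-analytic subset of $G \times G$.

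The key step is to check that this subset is \emph{proper}, i.e. that $\alpha$ is not identically equal to $g$. If $g \neq 1$ this is immediate, since $\alpha(1,1) = 1 \neq g$. If $g = 1$, then because $G$ is non-abelian there exist $a,b \in G$ with $ab \neq ba$, so $\alpha(a,b) \neq 1$; hence $\alpha \not\equiv g$ here too. (This is the only place the non-abelian hypothesis is used.) Consequently at least one $f_k$ is not identically zero on the connected manifold $G \times G$, and $\alpha^{-1}(g) \subseteq \{\, f_k = 0 \,\}$ lies inside the zero set of a single non-zero real-analytic function.

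I would then conclude using the principle that the zero set of a real-analytic function which is not identically zero on a connected real-analytic manifold has Lebesgue measure zero; since Haar measure is, in every chart, a smooth positive density times Lebesgue measure, this gives $\mu\big(\alpha^{-1}(g)\big) = 0$. The point that most deserves care---the main obstacle---is the passage from the vector-valued fibre to this scalar principle, but it is handled precisely by the containment in $\{\, f_k = 0 \,\}$ for a single $f_k \not\equiv 0$. I would finally remark that this argument uses neither the centrality of $g$ nor the structure theory of compact Lie groups; the centrality hypothesis together with the decomposition $G = Z(G)^0 \cdot [G,G]$ (which transports the problem to the semisimple group $[G,G]$, where all commutators already lie) gives an alternative, more computational route generalising the $SU(n)$ proposition, but the analytic argument subsumes it.
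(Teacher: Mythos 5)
Your argument is correct, but it takes a genuinely different route from the paper's. The paper passes to a finite cover so that $G$ becomes a product of a torus and simply connected simple factors, reduces to a single non-abelian factor, embeds that factor into $SU(n)$ by a faithful irreducible representation (so that the central element $g$ becomes a scalar matrix by Schur's lemma), and then invokes the explicit eigenvalue computation of the preceding $SU(n)$ proposition to confine the first coordinate of any point of the fibre to a measure-zero set. You instead exhibit $\alpha^{-1}(g)$ inside the zero set of a single real-analytic function that is not identically zero on the connected analytic manifold $G\times G$ --- and properness is exactly where the hypotheses enter: $\alpha(1,1)=1\neq g$ when $g\neq 1$, and non-abelianness when $g=1$ --- then apply the standard fact that such a zero set is Lebesgue-null in every chart, hence Haar-null. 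Your approach is more uniform and arguably cleaner: it uses neither the centrality of $g$ nor any structure theory (no covers, no product decomposition, no faithful irreducible representation, whose existence for an arbitrary simply connected simple factor actually requires a cyclic centre), and it simultaneously recovers the Lie-group case of Proposition 3.2. Its costs are that it leans on the measure-zero property of proper real-analytic subsets, which is standard but deserves a citation (e.g.\ via the identity theorem plus Weierstrass preparation, or a stratification argument), and that, unlike the conjugacy-class counting in Proposition 3.2, it does not extend to compact connected groups that are not Lie; within the scope of this proposition that restriction is harmless.
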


\begin{proof}
By going to a finite cover if necessary, we can assume that $G$ is a direct product of a torus and simple Lie groups which can all be assumed to be compact, connected and simply connected. 

The restriction of $\alpha$ to each of these direct factors is the commutator map of those direct factors and the measure of the set $\alpha^{-1}(g)$ is the product of measures of inverse images for each direct factor. 
It is, therefore, enough, to show that the inverse image has measure zero under any one direct factor. 
We will show that this holds for a non-abelian direct factor of $G$. 

We now assume that $G$ is one of these non-abelian direct factors. 
Any such group has a faithful irreducible representation $\rho: G \hookrightarrow SU(n)$ for some $n$ and then by Schur's lemma the image of a central element of $g \in G$ under this representation is a central element in $SU(n)$. 

The commutator map $\alpha$ on $G \times G$ is the restriction of the commutator map on $SU(n) \times SU(n)$ and $\alpha^{-1}(g)$ is simply the inverse of $g$ under the commutator map of $SU(n)$ intersected with $G$. 

From the above lemma, and its proof, we see if $a, b \in G$ with $aba^{-1}b^{-1} = g$ then the eigenvalues of $a$ are of a special form and hence the measure of $\alpha^{-1}(g)$ is zero. 
\end{proof}

We start with a connected, compact, Lie group and assume that it is non-abelian. 
By going to a finite cover, if necessary, we may (and will) assume that $G$ is a direct product of a torus and connected, simply connected, simple Lie groups. 
Since $G$ is assumed to be non-abelian, it will have at least one direct factor which is not a torus. 

It is known that irreducible representations of a compact Lie group $G$ are parametrized by dominant integral weights of a (fixed) maximal torus $T$, which are integral linear combinations of the fundamental weights of $T$. 
Thus, irreducible representations of $G$ are parametrized by ordered $r$-tuples of non-negative integers, $(n_1, \dots, n_r)$, where $r$ is the rank of the group $G$ which is equal to the dimension of $T$.
We let $Irr(G)_n$ denote the set of all irreducible representations of $G$ with the property that the sum of the corresponding integers is less than or equal to $n$. 

To give an example, if $G$ is $SU_2$ then the irreducible representations of $G$ are parametrized by weights of $S^1$, non-negative integers, and hence $Irr(SU_2)_n$ is the same as the set $\{0, 1, \dots, n\}$. 

We now state and prove our main theorem in this section. 

\begin{theorem}
Let $G$ be a connected, compact, non-abelian, Lie group.
Let $d$ denote the dimension of the group $G$ and let $r$ denote its rank. 
Let $Irr(G)_n$ denote the set of irreducible characters of $G$ whose sum of the integers in the corresponding weight is less than or equal to $n$, as defined above. 
Then for each $g \in G$, 
$$\lim_{n \to \infty} \frac{1}{|Irr(G)_n|^{d-r+1}} \sum_{\chi \in Irr(G)_n} \frac{\chi(g)}{\chi(1)} = 0.$$
\end{theorem}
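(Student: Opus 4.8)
The plan is to prove this by the trivial character bound, using the normalisation exponent $d-r+1$ to force convergence to zero. Since $G$ is compact, every irreducible representation is unitary, so for any $g \in G$ the eigenvalues of $\rho(g)$ lie on the unit circle; as there are $\chi(1)$ of them, we get $|\chi(g)| \le \chi(1)$ and hence $\left|\chi(g)/\chi(1)\right| \le 1$ for every $\chi \in Irr(G)_n$. By the triangle inequality this yields
$$\left| \sum_{\chi \in Irr(G)_n} \frac{\chi(g)}{\chi(1)} \right| \le |Irr(G)_n| .$$
Dividing by $|Irr(G)_n|^{d-r+1}$, the expression inside the limit is bounded in absolute value by $|Irr(G)_n|^{-(d-r)}$, so it suffices to understand the growth of $|Irr(G)_n|$ and the sign of the exponent.

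Next I would pin down these two quantities. The set $Irr(G)_n$ is indexed by the $r$-tuples of non-negative integers $(n_1, \dots, n_r)$ with $n_1 + \cdots + n_r \le n$, and the number of such tuples is $\binom{n+r}{r}$, which grows polynomially in $n$ and in particular tends to $\infty$ (already $r \ge 1$ suffices for a non-abelian $G$). For the exponent, the key point is that the non-abelianness hypothesis is exactly what guarantees $d > r$: the difference $d - r = \dim(G/T)$ equals the number of roots of $G$ with respect to $T$, which is at least $2$ whenever $G$ is not a torus. Thus $d-r > 0$, the exponent $-(d-r)$ is strictly negative, and $|Irr(G)_n|^{-(d-r)} \to 0$ as $n \to \infty$. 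A squeeze then gives
$$\lim_{n \to \infty} \frac{1}{|Irr(G)_n|^{d-r+1}} \sum_{\chi \in Irr(G)_n} \frac{\chi(g)}{\chi(1)} = 0 .$$

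There is, honestly, no serious analytic obstacle here; the only thing requiring care is recognising that the crude bound $|\chi(g)/\chi(1)| \le 1$ is already strong enough, because the extra power $|Irr(G)_n|^{d-r}$ in the denominator is dictated by the non-abelian geometry ($d-r \ge 2$) and overwhelms the at-most-linear size of the sum. I would emphasise that this result dovetails with the earlier propositions of this section: Propositions \ref{PropNonCent} and its successors establish that the left-hand side $\mu(\alpha^{-1}(g)) = 0$ for non-abelian connected compact Lie groups, so the present theorem confirms that the Frobenius-type sum likewise vanishes, thereby verifying the main formula in the non-abelian case with both sides equal to zero.
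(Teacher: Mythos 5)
Your proof is correct and follows essentially the same route as the paper's: bound $|\chi(g)/\chi(1)|$ by $1$ using unitarity, so the normalised sum is at most $|Irr(G)_n|^{-(d-r)}$, and then observe that non-abelianness forces $d-r\geq 2$ while $|Irr(G)_n|\to\infty$. Your justification of the character bound via unit-circle eigenvalues and your explicit count $|Irr(G)_n|=\binom{n+r}{r}$ are if anything slightly more careful than the paper's version, but the argument is the same.
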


\begin{proof}
By going to a finite cover, if necessary, we will assume that $G$ is a direct product of a torus and simple compact, connected Lie groups. 
The character values, $\chi(g)$, are certain finite sums of roots of unity and hence there is a natural upper bound on $|\chi(g)|$, the dimension $\chi(1)$ of the corresponding representation. 
Hence we get
$$\left|\sum_{\chi \in Irr(G)_n} \frac{\chi(g)}{\chi(1)}\right| \leq \sum_{\chi \in Irr(G)_n} \left|\frac{\chi(g)}{\chi(1)}\right| \leq \left|Irr(G)_n\right| .$$
Then 
$$\left|\frac{1}{|Irr(G)_n|^{d-r+1}} \sum_{\chi \in Irr(G)_n} \frac{\chi(g)}{\chi(1)}\right| \leq \frac{1}{|Irr(G)_n|^{d-r}} .$$
Since our group $G$ is non-abelian, it follows that it must have at least one non-abelian simple direct factor. 
In particular, it is not a torus and hence $d-r > 1$. 
Now the proof follows. 
\end{proof}

\begin{proposition}
Let $T$ be a connected, compact, abelian, Lie group.
Let $Irr(T)_n$ denote the set of irreducible characters of $T$ whose sum of the integers in the corresponding weight is less than or equal to $n$, as defined above. 
Then for each non-trivial $g \in T$, 
$$\lim_{n \to \infty} \frac{1}{|Irr(T)_n|} \sum_{\chi \in Irr(T)_n} \frac{\chi(g)}{\chi(1)} = 0.$$
\end{proposition}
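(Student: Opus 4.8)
The plan is to use the structure theory of connected compact abelian Lie groups to reduce the statement to a concrete estimate on an exponential (character) sum, and then to extract cancellation coming from the non-triviality of $g$. First I would invoke the classification: a connected, compact, abelian Lie group $T$ of rank $r$ is a torus, $T \cong (S^1)^r$, with $r = \dim T$. Since $T$ is abelian, every irreducible representation is one-dimensional, so $\chi(1) = 1$ for all $\chi \in Irr(T)$ and hence $\chi(g)/\chi(1) = \chi(g)$. Identifying $T$ with $(\mathbb{R}/\mathbb{Z})^r$, the irreducible characters are indexed by $\lambda \in \mathbb{Z}^r$ via $\chi_\lambda(t) = e^{2\pi i \langle \lambda, t \rangle}$, and writing $g = (e^{2\pi i \theta_1}, \dots, e^{2\pi i \theta_r})$ the hypothesis $g \neq 1$ says exactly that $\theta = (\theta_1, \dots, \theta_r)$ is non-zero in $(\mathbb{R}/\mathbb{Z})^r$, i.e. at least one coordinate $\theta_{j_0} \notin \mathbb{Z}$. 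The quantity to be analysed is therefore the average $\frac{1}{|Irr(T)_n|} \sum_{\lambda \in Irr(T)_n} e^{2\pi i \langle \lambda, \theta \rangle}$.

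The heart of the argument is to show that this numerator is of strictly smaller order than the denominator. I expect $|Irr(T)_n|$ to grow like a fixed power $n^r$, since it counts lattice points in the dilate by $n$ of a fixed symmetric convex body (a box or a cross-polytope, depending on the precise reading of ``weight-sum''), so $|Irr(T)_n| \asymp n^r \to \infty$. For the numerator, the idea is that summation in the non-trivial coordinate $j_0$ produces geometric cancellation. In the cleanest case, where $Irr(T)_n$ is a box $\{|\lambda_j| \le n\}$, the sum factors as a product of one-dimensional Dirichlet kernels,
$$\sum_{\lambda \in Irr(T)_n} e^{2\pi i \langle \lambda, \theta\rangle} = \prod_{j=1}^r \sum_{k=-n}^{n} e^{2\pi i k \theta_j} = \prod_{j=1}^r \frac{\sin((2n+1)\pi\theta_j)}{\sin(\pi\theta_j)},$$
where each factor with $\theta_j \notin \mathbb{Z}$ is bounded by the constant $1/|\sin(\pi\theta_j)|$, while each trivial factor equals $2n+1$. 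Since at least one coordinate is non-trivial, the product is $O(n^{r-1})$, whence the average is $O(1/n) \to 0$.

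The main obstacle is that the defining shape of $Irr(T)_n$ need not factor: when the weight-sum constraint is $\sum_j |\lambda_j| \le n$, the summation region is a cross-polytope rather than a box, and the exponential sum no longer splits into a product. To handle this I would pass to generating functions. The layer sums $A_k = \sum_{\sum|\lambda_j| = k} e^{2\pi i \langle \lambda, \theta \rangle}$ assemble into $\sum_k A_k z^k = \prod_j \frac{1 - z^2}{1 - 2z\cos(2\pi\theta_j) + z^2}$, so the partial sum in question is the coefficient $[z^n]$ of $\frac{1}{1-z}\prod_j \frac{1-z^2}{1 - 2z\cos(2\pi\theta_j)+z^2}$. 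A singularity analysis then shows that each non-trivial factor removes a power of $(1-z)$ from the pole at $z=1$ (its numerator vanishes there) and otherwise contributes only poles on the unit circle, which yield bounded oscillatory terms; consequently this coefficient grows at most like $n^{r-1}$, strictly slower than $|Irr(T)_n| \asymp n^r$.

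In either realisation the mechanism is the same, and in fact robust to the precise convention (positive orthant versus full lattice, simplex versus box): the denominator grows like $n^r$, whereas the presence of a non-trivial coordinate forces enough cancellation in the numerator to lower its growth by at least one power of $n$. Taking the ratio and letting $n \to \infty$ therefore gives $0$, which is the assertion of the proposition.
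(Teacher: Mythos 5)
Your proposal is correct and, in its main branch, is exactly the paper's argument: the paper also adopts the box convention $|m_j|\le n$ (so $|Irr(T)_n| = (2n+1)^k$), factors the character sum into one-dimensional Dirichlet kernels $\sum_{m=-n}^{n} e^{im\theta_j} = \sin((n+\tfrac12)\theta_j)/\sin(\theta_j/2)$, and uses boundedness of the factor in a non-trivial coordinate to conclude the average is $O(1/n)$. Your additional generating-function treatment of the cross-polytope reading of ``weight-sum'' is sound but unnecessary given the paper's convention.
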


\begin{proof}
A compact, connected, abelian Lie group is a compact torus, if $k = \dim T$ then $T$ is isomorphic to $k$ copies of $S^1$. 
Further, every irreducible character $\chi$ of $T$ is one dimensional and 
$$\chi(e^{i\theta_1}, \dots, e^{i\theta_k}) = \prod_j e^{im_j\theta_j} \hskip2mm {\rm with} \hskip2mm m_j \in \mathbb{Z} .$$
The weight of $\chi$ is given by the $k$-tuple $(m_1,\ldots, m_k)$. 
Thus, the set of characters of $T$ with weight $\leq n$ are given by $(m_1,\ldots, m_k)$ with $|m_j| \leq n$ for all $j$. 
The number of characters with this property is $(2n+1)^k$. 

Now, for $g \in T$, where $g = (e^{i\theta_1},\ldots, e^{i\theta_k})$ we want to compute
$$\lim_{n \to \infty} \frac{1}{|Irr(T)_n|} \sum_{\chi \in Irr(T)_n} \frac{\chi(g)}{\chi(1)} .$$

If $\theta_j \in 2\pi\mathbb{Z}$ for all $j$ then $g = 1$ and we get the above limit to be $1$.

On the other hand, if $g$ is non-trivial then some $\theta_j \not \in 2\pi\mathbb{Z}$. 
Hence
$$\lim_{n \to \infty} \frac{1}{|Irr(T)_n|} \sum_{\chi \in Irr(T)_n} \frac{\chi(g)}{\chi(1)} =  \lim_{n \to \infty} \frac{1}{(2n+1)^k}\sum_{|m_j|\leq n} \prod_{j=1}^ke^{im_j\theta_j}$$
$$= \lim_{n\rightarrow \infty} \frac{1}{(2n+1)^k}\prod_{j=1}^k\left(\sum_{m_j =- n}^n e^{im_j\theta_j}\right)$$

Now, since $\theta_j \not\in 2\pi\mathbb{Z}$, ${\displaystyle \sum_{m_j =- n}^n e^{im_j\theta_j} = 1+ \sum_{m_j = 1}^n 2\cos (m_j\theta_j) = \frac{\sin ((n+ 1/2)\theta_j)}{\sin(\theta_j/2)}}$ which is bounded above for the fixed $\theta_j$. 
Hence the above limit is zero. 
\end{proof}

We recall the statement of Theorem 1.1 here and give its proof.
\vskip5mm 

\noindent{\bf Theorem 1.1.}
{\em 
Let $G$ be a connected, compact, Lie group with normalised Haar measure $\mu$.
Let $d$ denote the dimension of the group $G$ and let $r$ denote the rank of $G$. 
Let $Irr(G)_n$ denote the set of irreducible characters of $G$ whose weight-sum is less than or equal to $n$. 
Then for each $g \in G$, 
$$\mu\left(\alpha^{-1}(g)\right) = \lim_{n \to \infty} \frac{1}{|Irr(G)_n|^{d-r+1}} \sum_{\chi \in Irr(G)_n} \frac{\chi(g)}{\chi(1)} .$$}

\begin{proof}
If $G$ is an abelian, compact, connected, Lie group then it is a compact torus and we have $d = r$. 
If $g \in G$ is the identity element then $\alpha^{-1}(g)$ is the whole $G \times G$, hence $\mu\left(\alpha^{-1}(g)\right) = 1$. 
It follows, as mentioned in the proof of the above proposition, that the right-hand side also gives $1$. 

We now prove that, in all other cases, both sides of the required equation are zero. 

If $G$ is abelian and $g \in G$ is non-trivial then $\alpha^{-1}(g)$ is the empty set and its measure is zero.
The limit of the character sum is also zero in this case, by Proposition 3.7. 

If $G$ is non-abelian then the limit of the character sum in the above equation is zero for every $g \in G$ by Theorem 3.6. 
If $G$ is non-abelian and $g \in G$ is a non-central element then $\mu(\alpha^{-1}(g)) = 0$ by Proposition 3.2 and finally if $g$ is a central element of a non-abelian $G$ then $\mu(\alpha^{-1}(g)) = 0$ by Proposition 3.5. 

This completes the proof. 
\end{proof}

\section{FC Compact Groups}

Let $G$ be a group. 
The {\em FC-centre} of $G$, denoted by $FC(G)$,  is defined to be the set of $g \in G$ such that the conjugacy class $\mathcal{C}_G(g)$ of $g$ in $G$ is finite: 
$$FC(G) = \{g \in G: |\mathcal{C}_G(g)|<\infty \} .$$
The FC-centre of a group $G$ is a normal subgroup of $G$. 
A group $G$ is said to be an {\em FC-group} if $FC(G) = G$.
The centre of a group $G$ is a subgroup of $FC(G)$ and every abelian group is an FC-group. 
There are examples of non-abelian FC groups, for instance, take $G$ to be the normaliser of the maximal torus in $SU(n)$ for $n > 1$. 

Let $G$ be a compact Lie FC group of the form $G = A\times T$ where $A$ is a finite group and $T$ is a torus. 
Let $Irr(T)_n$ denote the set of all irreducible characters of $T$ of weight less than or equal to $n$.
Let $\mu$ denote the normalised Haar measure on $G$ and $\mu^2$ denote the corresponding measure on $G \times G$. 
Finally, let $\alpha: G \times G \to G$ denote the commutator map. 
We then have:

\begin{lemma}\label{LemSnFin}
With the above notations, let $X_0\in G$. 
Then 
$$\mu^2 \big(\alpha^{-1}(X_0)\big)  = \lim_{n\rightarrow\infty}\left(\frac{1}{|A||Irr(T)_n|}\sum_{\chi \in Irr(A\times T)_n} \frac{\chi(X_0)}{\chi(1)}\right) .$$
\end{lemma}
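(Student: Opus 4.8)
The plan is to reduce the whole statement to the finite group $A$ by exploiting that $T$ is a central direct factor, and then to match the two sides through the finite-group Frobenius formula (Theorem 2.2) together with the torus averaging computation (Proposition 3.7).

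First I would analyse the fibre. Write $X_0 = (a_0, t_0)$ with $a_0 \in A$ and $t_0 \in T$. Since $T$ is abelian and a direct factor, the commutator of two elements of $G = A \times T$ lies in $A \times \{e\}$; explicitly $\alpha\big((a_1,t_1),(a_2,t_2)\big) = \big([a_1,a_2], e\big)$, independent of $t_1$ and $t_2$. Hence $\alpha^{-1}(X_0)$ is empty when $t_0 \neq e$, so $\mu^2(\alpha^{-1}(X_0)) = 0$ in that case. When $t_0 = e$, after reordering coordinates the fibre is $F_{a_0} \times T \times T \subseteq A \times A \times T \times T$, where $F_{a_0} = \{(a_1,a_2)\in A^2 : [a_1,a_2] = a_0\}$. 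Because $\mu$ is the product of the normalised counting measure on $A$ and the normalised Haar measure $\mu_T$ on $T$, and $\mu_T(T) = 1$, I obtain
$$\mu^2(\alpha^{-1}(X_0)) = \frac{|F_{a_0}|}{|A|^2} = \frac{f_A(a_0)}{|A|^2},$$
where $f_A$ is the Frobenius count for the finite group $A$. Applying Theorem 2.2 to $A$ gives $f_A(a_0) = |A| \sum_{\psi} \psi(a_0)/\psi(1)$, the sum running over irreducible characters $\psi$ of $A$, so that $\mu^2(\alpha^{-1}(X_0)) = \frac{1}{|A|} \sum_{\psi} \psi(a_0)/\psi(1)$ when $t_0 = e$.

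Next I would compute the right-hand side by factoring the character sum. The irreducible characters of $A \times T$ are exactly the external products $\psi \boxtimes \chi$ with $\psi$ an irreducible character of $A$ and $\chi$ an irreducible character of $T$; the weight of $\psi \boxtimes \chi$ is the weight of $\chi$, since $A$ contributes nothing to the maximal torus $T$ of $G$. Consequently $Irr(A \times T)_n$ is the product set $\hat A \times Irr(T)_n$, and using $(\psi\boxtimes\chi)(X_0) = \psi(a_0)\chi(t_0)$ and $(\psi\boxtimes\chi)(1) = \psi(1)$ the double sum factors:
$$\sum_{\chi \in Irr(G)_n} \frac{\chi(X_0)}{\chi(1)} = \left(\sum_{\psi} \frac{\psi(a_0)}{\psi(1)}\right)\left(\sum_{\chi \in Irr(T)_n} \chi(t_0)\right).$$
Dividing by $|A|\,|Irr(T)_n|$ and letting $n \to \infty$, Proposition 3.7 shows that $\frac{1}{|Irr(T)_n|}\sum_{\chi \in Irr(T)_n}\chi(t_0)$ tends to $0$ when $t_0 \neq e$, while for $t_0 = e$ each $\chi(e) = 1$ makes this average identically $1$. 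Hence the right-hand side equals $\frac{1}{|A|}\sum_{\psi}\psi(a_0)/\psi(1)$ when $t_0 = e$ and $0$ when $t_0 \neq e$, matching the fibre measure in both cases.

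The computation itself is short; the only point demanding care, and the step I expect to be the real content, is the bookkeeping of the index set and the normalisation. One must justify that for the disconnected group $A \times T$ the weight is read off from $\chi$ alone, so that $Irr(A\times T)_n = \hat A \times Irr(T)_n$ with $|Irr(A\times T)_n| = |\hat A|\,|Irr(T)_n|$, and then observe that the stated normalisation divides by $|A|\,|Irr(T)_n|$ rather than by $|Irr(A\times T)_n|$. The factor $|A|$ is precisely what is consumed by the $|A|$ produced in Frobenius' formula for $f_A$, while the full sum $\sum_{\psi} \psi(a_0)/\psi(1)$ over characters of $A$ is retained rather than averaged; this is what makes the two sides agree, and it is worth verifying explicitly that no spurious factor of $|\hat A|$ is introduced.
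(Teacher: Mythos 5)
Your proof is correct and follows essentially the same route as the paper: decompose the fibre of $\alpha$ as a product over the factors $A$ and $T$, factor the character sum via $Irr(A\times T)_n = \hat{A}\times Irr(T)_n$, and conclude from the finite-group Frobenius formula together with the torus averaging of Proposition 3.7. You merely spell out more explicitly than the paper does that the fibre is empty unless $t_0 = e$ (in which case it is $F_{a_0}\times T^2$) and that the $|A|$ in the normalisation is exactly absorbed by the $|A|$ in Frobenius' formula.
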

\begin{proof}
Let $X_0 = (a_0, g_0)\in G = A \times T$. 
Then the set $G^{(2)}_{X_0} := \{(X,Y)\in G^2 \mid [X,Y] = X_0 \}$ is of the form:
$\big\{((a_1,g_1), (a_2,g_2))\in A \times T \mid [a_1,a_2] = a_0, [g_1, g_2] = g_0 \big\}$.
Thus $G^{(2)}_{X_0} = A^{(2)}_a \times T^{(2)}_g$. 

We know that any irreducible character $\rho$ of $G$ is of the form $\eta \times \chi$, where $\eta$ is an irreducible character of $A$, and $\chi$ is a character of $T$.
Further, if $T$ is the $k$-dimensional torus then $\chi = \chi_1\times \chi_2 \times\cdots\times \chi_k$, where $\chi(e^{i\theta_1}, \dots, e^{i\theta_k}) = \prod \chi_j(e^{i\theta_j})$ and $\chi_j(e^{i\theta_j}) = e^{im_j\theta_j}$ with $m_j \in \mathbb{Z}$. 
The weight of $\chi$ is given by the $k$-tuple $(m_1,\ldots, m_k)$. 
Thus, the set of characters of $G$ with weight $\leq n$ are of the form $(\eta,\chi_1,\ldots, \chi_k)$, where $\chi_j(e^{i\theta}) = e^{i m_j\theta}$, with $|m_j| \leq n$ for all $j$, and $\eta \in Irr(A)$. 
Thus $Irr(G)_n = Irr(A)\times Irr(T)_n$.

Now, for $X = (a, g)\in G$ and $\rho \in Irr(G)_n$:
$$\lim_{n\rightarrow\infty}\left(\frac{1}{|A||Irr(T)_n|}\sum_{\rho \in Irr(A\times T)_n} \frac{\rho(X)}{\rho(1)}\right) = \lim_{n\rightarrow \infty} \frac{1}{|A|(2n+1)^k}\sum_{\eta \in Irr(A)}\sum_{\chi \in Irr(T)_n} \frac{\eta(a)\chi(g)}{\eta(1)}.$$
This is equal to 
$$\left(\frac{1}{|A|}\sum_{\eta \in Irr(A)}\frac{\eta(a)}{\eta(1)}\right) \left(\lim_{n\rightarrow \infty} \frac{1}{(2n+1)^k}\sum_{\chi \in Irr(T)_n} \chi(g)\right) .$$
This proves the result as we have already proved the result for finite group $A$ and the torus $T$. 
\end{proof}

We now prove a structure theorem for compact Lie FC groups. 

\begin{theorem}\label{StrLieFC}
Let $G$ be a compact Lie group which is an FC group and let $G^0$ denote the connected component of $G$ containing the identity. 
Then $G$ is of the form  $\displaystyle\frac{G^0\times \Delta}{N}$, where $\Delta$ is a finite group and $N$ is a normal subgroup of $G^0$ as well as to a normal subgroup of $\Delta$. 
Moreover, the embedding of $N$ in $G^0 \times \Delta$ is the diagonal embedding $n \mapsto (n, n^{-1})$ under those isomorphisms. 
\end{theorem}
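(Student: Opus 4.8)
The plan is to first pin down the structure of the identity component and its position inside $G$, and only then to manufacture a finite complement $\Delta$; the decomposition will then drop out of the first isomorphism theorem.

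First I would show that $G^0$ is a \emph{central} torus. Since $G^0$ is a connected compact Lie group, if it were non-abelian it would contain a non-central element $x$ whose $G^0$-conjugacy class has positive dimension $\dim G^0 - \dim Z_{G^0}(x) > 0$ (exactly as in the proof of Proposition~\ref{PropNonCent}), hence infinitely many conjugates already inside $G^0 \subseteq G$, contradicting the FC hypothesis. So $G^0$ is a torus $T$. To see $T$ is central, fix $g \in G$ and consider the continuous homomorphism $T \to T$, $t \mapsto t\,(gtg^{-1})^{-1}$; its image is a subtorus, and the conjugacy class of $g$ contains a coset of that image, so finiteness of the class forces the image to be trivial, i.e. $g$ centralises $T$. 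As $g$ was arbitrary, $T = G^0$ is central. Write $P := G/G^0$, which is finite since a compact Lie group has finitely many components.

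The crux is to produce a finite subgroup $\Delta \le G$ surjecting onto $P$, which I would do in three steps. (i) Since $T$ is central and divisible, every $\gamma \in P$ admits a finite-order lift: from any lift $g$ one has $g^{m} \in T$ with $m = \mathrm{ord}(\gamma)$, and divisibility of $T$ gives $t \in T$ with $t^{m} = g^{m}$, so $gt^{-1}$ is a finite-order lift of $\gamma$. (ii) Because $T \le Z(G)$ has finite index $|P|$, Schur's theorem guarantees that the derived subgroup $[G,G]$ is finite. (iii) Let $\Delta$ be generated by one finite-order lift $g_\gamma$ for each $\gamma \in P$; it is finitely generated and surjects onto $P$. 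Its derived subgroup lies in the finite group $[G,G]$, while its abelianisation is a finitely generated abelian group generated by the torsion classes of the $g_\gamma$, hence is finite. Therefore $\Delta$ is finite and $\Delta T = G$.

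With $\Delta$ in hand the conclusion is immediate. Centrality of $G^0$ makes $\phi : G^0 \times \Delta \to G$, $\phi(t,\delta) = t\delta$, a homomorphism, and it is onto since $\Delta$ meets every component. Its kernel is $\{(t,\delta) : t\delta = 1\} = \{(n, n^{-1}) : n \in G^0 \cap \Delta\}$, which is exactly the claimed diagonal copy of $N := G^0 \cap \Delta$; this $N$ is finite (being inside $\Delta$), normal in $G^0$ (abelian) and normal in $\Delta$ (as $G^0 \trianglelefteq G$), so the first isomorphism theorem yields $G \cong (G^0 \times \Delta)/N$ with the stated embedding. The main obstacle is the finiteness of $\Delta$ in step (iii): individual finite-order lifts are easy, but their products need not have finite order, so the argument genuinely needs both Schur's theorem and the remark that a finitely generated abelian group generated by torsion elements is finite. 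If one wished to avoid Schur, the same $\Delta$ can be extracted cohomologically, by lifting the extension class in $H^2(P,T)$ to $H^2(P,T_{\mathrm{tor}}) = \varinjlim_m H^2(P, T[m])$ and noting the limit is attained at finite level, but the route above is cleaner.
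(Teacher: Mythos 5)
Your proof is correct, but it takes a genuinely different route from the paper's. The paper simply invokes Theorem 3.9 of \cite{HR}, which already decomposes an arbitrary compact FC group as $(G^0\times\Delta)/N$ with $\Delta$ profinite, totally disconnected and $N$ diagonally embedded; the only work left there is to observe that in the Lie case $N$, being simultaneously a closed subgroup of the torus $G^0$ and a closed subgroup of the profinite group $\Delta$, must be finite, whence $\Delta$ is finite because $\Delta/N\cong G/G^0$ is. You instead rebuild the decomposition from scratch inside $G$: you first prove directly that $G^0$ is a central torus (the homomorphism $t\mapsto t(gtg^{-1})^{-1}$, whose image is a connected subgroup sitting inside a finite conjugacy class, is a clean way to extract centrality from the FC hypothesis), then manufacture a finite subgroup $\Delta\le G$ from finite-order lifts of the component group, using divisibility of the torus together with Schur's theorem to control products of lifts, and finish with the first isomorphism theorem, which hands you the diagonal kernel $N=G^0\cap\Delta$ for free. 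Your argument is longer but self-contained and elementary, and it in fact supplies a proof of the assertion, made without argument in the paper, that $G^0$ is central; the paper's route is shorter but outsources the main structural content to \cite{HR}. You correctly identify and close the one delicate point in your construction, namely that individual finite-order lifts do not obviously generate a finite group, which is exactly why Schur's theorem (finiteness of $[G,G]$) plus the finiteness of a finitely generated abelian group generated by torsion elements is needed.
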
 

\begin{proof}
By Theorem 3.9 of \cite{HR}, a compact FC group $G$ is isomorphic to $\frac{G^0 \times \Delta}{N}$ where $\Delta$ is a profinite, totally disconnected group and $N$ consists of elements of the type $(g, g^{-1}) \in G^0 \times \Delta$.

The subgroup $N$ is thus isomorphic to a subgroup of $G^0$ and a subgroup of $\Delta$.
Further, if $G$ is a compact Lie FC group then by the structure theory of compact Lie groups $G^0$ is a central compact subgroup of $G$ and hence it is a torus. 
Since $N$ is isomorphic to a closed subgroup of $G^0$, it is the product of a torus (of possibly smaller dimension) and a finite abelian group. 
Since $N$ is a closed subgroup of $\Delta$, it is also profinite. 
Hence $N$ is finite. 

Now, $G/G^0$ is finite and is isomorphic to $\Delta/N$. Since $N$ is finite, $\Delta$ has to be finite. 
The proof is now complete. 
\end{proof}

We recall the statement of Theorem 1.2 here and give its proof. 
\vskip 0.5cm
\noindent{\bf Theorem 1.2.}
{\em 
Let ${\displaystyle G \cong \frac{G^0\times \Delta}{N}}$ be a compact Lie FC group. 
Then for $g_0\in G$, we have 
$$\mu^2 \big(\alpha^{-1}(g_0)\big) = \frac{|N|}{|\Delta|}\lim_{t \rightarrow \infty}\left(\frac{1}{|Irr(G^0)_t|}\sum_{\chi \in Irr(G)_t}\frac{\chi(g_0)}{\chi(1)}\right).$$
}

\begin{proof}
We know that $G\cong \frac{G^0\times \Delta}{N}$, where $\Delta$ is finite, and $N$ is a finite central subgroup of $G^0\times \Delta$. Denote $G^0\times \Delta$ by $H$. Let $q: H\rightarrow G$ denote the quotient map.

We claim that $\{hN\mid h \in H'\} = G'$. 
Let $a,b\in G$, $a = xN$, $b = yN$, then we have $[a,b] = [xN,yN] = xNyNx^{-1}Ny^{-1}N$, which is $xyx^{-1}y^{-1}N$, since $N \subseteq Z(H)$. 

Let $g_0 \in G'$, so $g_0 = h_0N$, where $h_0\in H'$. 
Now, $h_0 = (e, k_0)$, where $k_0 \in \Delta'$, since $G^0$ is abelian. 
As, $G$ is a quotient of $H$, the normalized Haar measure $\mu$ on $G$ is $\mu(Y) = \nu(q^{-1}(Y))$, where $\nu$ is the normalized Haar mesaure on $H$. 

Now, for $g_0\in G'$, the set $G^{(2)}_{g_0} = \{(a,b)\in G^2 \mid [a,b]= g_0 \}$. 
Then 
$$q^{-1}(G^{(2)}_{g_0}) = \displaystyle\sqcup_{n\in N} \{(x,y)\in H^2 \mid [x,y] = h_0n \} .$$
Hence $\mu^2(G^{(2)}_{g_0}) = \displaystyle\sum_{n\in N}\nu^2(\{(x,y)\in H^2 \mid [x,y] = h_0n \})$. 
Now, as $H = G^0\times \Delta$, where $G^0$ is a product of finite copies of $S^1$, and $\Delta$ is finite, by Lemma~\ref{LemSnFin}, we have:
$$\sum_{n\in N}\nu^2(\{(x,y)\in H^2 \mid [x,y] = h_0n \}) = \sum_{n \in N}\left( \lim_{t \rightarrow \infty}\frac{1}{|\Delta||Irr(G^0)_t|}\sum_{\chi \in Irr(H)_t}\frac{\chi(h_0n)}{\chi(1)}\right) .
$$
For $\chi \in Irr(H)$, let $\rho$ be the associated irreducible representation. 
Then by Schur's lemma $\rho(n) = \lambda_\chi(n) Id$, where $\lambda_\chi(n) \in \C$. 
Thus $\chi(h_0n)= \lambda_n\chi(h_0)$. As $N$ is finite, $\lambda_\chi(n)$ is a $k$th root of unity for some integer $k \mid |N|$. 
Hence we have the following: 
\begin{eqnarray*}
\mu^2(G^{(2)}_{g_0}) &=& \sum_{n \in N}\left( \lim_{t \rightarrow \infty}\frac{1}{|\Delta||Irr(G^0)_t|}\sum_{\chi \in Irr(H)_t}\frac{\chi(h_0n)}{\chi(1)}\right)\\
~&=& \lim_{t \rightarrow \infty}\left(\frac{1}{|\Delta||Irr(G^0)_t|}\sum_{n \in N}\sum_{\chi \in Irr(H)_t}\frac{\lambda_\chi(n)\chi(h_0)}{\chi(1)}\right)\\
~&=&\lim_{t \rightarrow \infty}\left(\frac{1}{|\Delta||Irr(G^0)_t|}\sum_{\chi \in Irr(H)_t}\frac{\left(\sum_{n \in N}\lambda_\chi(n)\right)\chi(h_0)}{\chi(1)}\right)\\
\end{eqnarray*}
Now, when $\chi|_N \neq \deg(\chi)$, $\lambda_\chi(n)$ is a non-trivial $k_\chi$th root of unity, for some positive integer $k_{\chi} \geq 2$. 
In this case, we have $\sum_{n\in N}\lambda_\chi(n) = \frac{|N|}{k_\chi}\sum_{l=0}^{k_\chi} e^{i2l\pi/k_\chi}  = 0$. 
Hence 
$$\mu^2(G^{(2)}_{g_0}) = \lim_{t \rightarrow \infty}\left(\frac{1}{|\Delta||Irr(G^0)_t|}\sum_{\substack{\chi \in Irr(H)_t\\ \chi|_N = \deg(\chi)}}\frac{\left(|N|\right)\chi(h_0)}{\chi(1)}\right)$$
As $N$ is finite abelian, any irreducible character of $H$ is $\chi_0 \times \eta$, where $\chi_0 \in Irr(G)$, and $\eta \in \hat{N}$. 
Thus $Irr(H)_t = \{\chi_0\times \eta \mid \chi_0\in Irr(G)_t,\eta\in \hat{N} \}$, thus $|Irr(H)_t| = |N||Irr(G)_t|$. 
Thus, 
\begin{eqnarray*}
\mu^2(G^{(2)}_{g_0}) &=& \lim_{t \rightarrow \infty}\left(\frac{1}{|\Delta||Irr(G^0)_t|}|N|\sum_{\chi \in Irr(G)_t}\frac{\chi(g_0)}{\chi(1)}\right)\\
&=& \frac{|N|}{|\Delta|}\lim_{t \rightarrow \infty}\left(\frac{1}{|Irr(G^0)_t|}\sum_{\chi \in Irr(G)_t}\frac{\chi(g_0)}{\chi(1)}\right).
\end{eqnarray*}
\end{proof}

\section{Compact Lie Groups with Open FC-Centre}
Let $G$ be a compact Lie group with an open FC centre $F$. 
We know from \cite{HR} that for a compact group $G$, $\mu(\alpha^{-1}(1)) > 0$ if and only if the FC centre $F$ is open in $G$. 
As an open subgroup of $G$, $F$ is of finite index in $G$. 

Now, for $g \notin F$, the conjugacy class of $g$ is an infinite set. 
Thus $\mu(\alpha^{-1}(g)) = 0$ for $g \notin F$.

Let $g \in F$. 
The set $\alpha^{-1}(g) = \{(x,y) : [x,y] = g\}$. 
For $x \in G$ we define the following subset of $G$: $$Z_G^g(x) = \{y \in G : [x,y] = g\}.$$ 

\begin{lemma}\label{LOFC1}
With the above notations, $\mu(Z^g_G(x))=\mu(Z_G(x))$.
\end{lemma}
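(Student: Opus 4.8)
The plan is to identify the set $Z^g_G(x)$, whenever it is nonempty, as a single left coset of the centraliser $Z_G(x) = Z^1_G(x)$, and then to conclude by left-invariance of the Haar measure $\mu$. Note first that $Z_G(x)$ is a closed subgroup of $G$, hence measurable, and that every left translate of it has the same measure. Thus it suffices to exhibit the coset structure.

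First I would fix some $y_0 \in Z^g_G(x)$, that is, $[x, y_0] = g$, and show that $y \in Z^g_G(x)$ if and only if $y_0^{-1} y \in Z_G(x)$. Writing the condition $[x, y] = [x, y_0] = g$ as
\[ x y x^{-1} y^{-1} = x y_0 x^{-1} y_0^{-1}, \]
cancelling $x$ on the left and conjugating appropriately gives
\[ (y_0^{-1} y)\, x^{-1}\, (y_0^{-1} y)^{-1} = x^{-1}, \]
so $y_0^{-1} y$ commutes with $x$, i.e. $y_0^{-1} y \in Z_G(x)$. The converse is the same computation read backwards, using that any $z \in Z_G(x)$ satisfies $z x^{-1} = x^{-1} z$, whence $[x, y_0 z] = x y_0 x^{-1} y_0^{-1} = g$. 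This establishes $Z^g_G(x) = y_0\, Z_G(x)$, and then left-invariance of $\mu$ yields $\mu(Z^g_G(x)) = \mu(y_0\, Z_G(x)) = \mu(Z_G(x))$.

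The one point that genuinely needs care is the empty-fibre case: the coset argument only applies once some $y_0$ with $[x, y_0] = g$ is available, and for some $x$ (for instance a central $x$ and $g \neq 1$) the set $Z^g_G(x)$ is empty while $Z_G(x) = G$. I expect this to be the main subtlety rather than the coset computation itself. In the setting at hand this is harmless: for $x \notin F$ the centraliser $Z_G(x)$ has infinite index and hence measure zero, so both sides vanish, while the equality for the remaining $x$ is to be understood (and is used) exactly on the fibres that contribute to $\mu^2(\alpha^{-1}(g))$, where a solution $y_0$ exists. I would therefore state the identity as holding whenever $Z^g_G(x) \neq \emptyset$, which is all that the subsequent integration over $x$ requires.
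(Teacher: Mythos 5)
Your proof is correct and follows essentially the same route as the paper: both identify $Z^g_G(x)$, when nonempty, as a left coset $h_0 Z_G(x)$ of the centraliser by a fixed solution $h_0$ of $[x,h_0]=g$, and then conclude by translation invariance of $\mu$. Your caveat that the identity fails when $Z^g_G(x)=\emptyset$ (e.g.\ $x$ central and $g\neq 1$, where the left side is $0$ but $\mu(Z_G(x))=1$) is a genuine point that the paper's proof silently assumes away by fixing such an $h_0$; restricting the statement to the nonempty case, as you propose, is the right fix and is all the subsequent integration over $x$ needs.
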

\begin{proof}
Fix $h_0$, such that $xh_0x^{-1}h_0 = g$. 
Let $y \in Z_G(x)$. 
We see that:
$$[x,h_0y] = x(h_0y)x^{-1}y^{-1}h_0^{-1} = xh_0x^{-1}h_0^{-1} = [x, h_0]$$ 
since $y \in Z_G(x)$.
Thus $h_0.Z_G(x) \subseteq Z_G^g(x)$.
Let $z \in Z_G^g(g)$. Let $y = h_0^{-1}z$.
Then 
$$[x,y] = xh_0^{-1}zx^{-1}z^{-1}h_0 = xh_0^{-1}x^{-1}gh_0 = 1 .$$ 
Thus $h_0^{-1}z \in Z_G(x)$, $z \in h_0Z_G(x)$. 
This implies that $Z^g_G(X) \subseteq h_0Z_G(x)$ so $Z_G^g(x) = h_0Z_G(x)$.
Hence, $\mu(Z_G^g(x)) = \mu(Z_G(x))$.
\end{proof}

Now, for $x \notin F$, $Z_G(x)$, $\mu(Z_G(x)) = 0$, as $Z_G(x)$ has strictly lower dimension than $G$, $\mu(Z_G(x)) = 0.$ 
Thus 
$$\mu(\alpha^{-1}(g)) = \int_G \mu(Z_G(x)) dx = \int_{F} \mu(Z_G(x)) dx .$$ 
We now show the following:

\begin{theorem}
Let $G$ compact group with an open FC-centre $F$. 
For $g \in F$, $$\mu(\alpha^{-1}(g)) = \mu(\{(x,y) \in F^2: [x,y] = g\}).$$
\end{theorem}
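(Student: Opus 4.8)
The plan is to show that $\alpha^{-1}(g)$ differs from its intersection with $F \times F$ by a null set. Since $F$ is open of finite index, $F \times F$ is measurable, and writing $\alpha^{-1}(g) = \big(\alpha^{-1}(g) \cap (F \times F)\big) \sqcup R$ with $R = \alpha^{-1}(g) \setminus (F \times F)$, the identity to be proved is exactly $\mu^2(R) = 0$, because $\alpha^{-1}(g) \cap (F \times F) = \{(x,y) \in F^2 : [x,y] = g\}$. As $R \subseteq A \cup B$ with $A = \{(x,y) : x \notin F,\ [x,y] = g\}$ and $B = \{(x,y) : y \notin F,\ [x,y] = g\}$, by subadditivity it suffices to show $\mu^2(A) = \mu^2(B) = 0$.

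For $A$ I would apply Fubini, integrating over $y$ with $x$ fixed, to get $\mu^2(A) = \int_{G \setminus F} \mu\big(\{y : [x,y] = g\}\big)\,dx = \int_{G \setminus F} \mu\big(Z^g_G(x)\big)\,dx$. For each fixed $x \notin F$ this slice measure is zero: if no $y$ satisfies $[x,y] = g$ the slice is empty, while otherwise Lemma~\ref{LOFC1} gives $\mu(Z^g_G(x)) = \mu(Z_G(x))$, and $\mu(Z_G(x)) = 0$ for $x \notin F$ by the dimension count recorded just above the statement (such $x$ have infinite conjugacy class, so $\dim Z_G(x) < \dim G$). Hence the integrand vanishes identically and $\mu^2(A) = 0$.

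The term $B$ is the step I expect to be the main obstacle, since Lemma~\ref{LOFC1} is stated only in terms of the second commutator slot; here I would integrate over $x$ with $y$ fixed, giving $\mu^2(B) = \int_{G \setminus F} \mu\big(\{x : [x,y] = g\}\big)\,dy$. The device that makes Lemma~\ref{LOFC1} applicable is the commutator symmetry $[x,y] = g \iff [y,x] = g^{-1}$, which identifies the $x$-slice $\{x : [x,y] = g\}$ with $\{x : [y,x] = g^{-1}\} = Z^{g^{-1}}_G(y)$; applying the lemma with the roles of the two arguments exchanged (or noting emptiness) yields slice measure $\mu(Z_G(y))$, which is again $0$ for $y \notin F$ by the same dimension count. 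Thus $\mu^2(B) = 0$, so $\mu^2(R) \le \mu^2(A) + \mu^2(B) = 0$, and therefore $\mu(\alpha^{-1}(g)) = \mu^2\big(\alpha^{-1}(g) \cap (F \times F)\big) = \mu\big(\{(x,y) \in F^2 : [x,y] = g\}\big)$, as required. The only facts used beyond Lemma~\ref{LOFC1} are compactness of $G$, so that Fubini applies to the finite product measure on the closed fibre, and the vanishing of $\mu(Z_G(y))$ for $y \notin F$.
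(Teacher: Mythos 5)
Your proposal is correct and follows essentially the same route as the paper: split off the slices with $x \notin F$ and with $y \notin F$, and show each has measure zero via Lemma~\ref{LOFC1} together with $\mu(Z_G(x)) = 0$ for $x \notin F$. Your treatment is in fact a bit more careful than the paper's, since you explicitly handle the possibly empty slices and spell out the symmetry $[x,y]=g \iff [y,x]=g^{-1}$ needed to apply the lemma to the $y \notin F$ piece, which the paper only gestures at.
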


\begin{proof}
We observe the following about $\alpha^{-1}(g)$
\begin{eqnarray*}
        \alpha^{-1}(g) &=& \bigsqcup_{x \in G}\{y \in G: [x,y] = g\}\\
        &=& \left(\bigsqcup_{x \in F}\{y \in G: [x,y] = g\}\right) \sqcup \left(\bigsqcup_{x \in G\setminus F}\{y \in G: [x,y] = g\}\right)\\
        &=& \left(\bigsqcup_{x \in F}\left(\{y \in F: [x,y] = g\}\right) \sqcup \{y \in G \setminus F: [x,y] = g\}\right) \sqcup A_{G \setminus F}\\
        &~& \text{ where }A_{G \setminus F} = \left(\bigsqcup_{x \in G\setminus F}\{y \in G: [x,y] = g\}\right)\\
        &=& \{(x,y) \in F \times F: [x,y] = g\} \sqcup\left(\bigsqcup_{x \in F}\{y \in G \setminus F: [x,y] = g\}\right) \sqcup A_{G \setminus F}\\
    \end{eqnarray*}
We see that $\mu(A_{G \setminus F}) = \int_{x \notin F} \mu(Z_G^g(x)) dx = 0$, by Lemma~\ref{LOFC1}. 
Now, we can rewrite the set $\bigsqcup_{x \in F}\{y \in G \setminus F: [x,y] = g\}$ as $\bigsqcup_{y \in G \setminus F}\{x \in F \setminus F: [x,y] = g\}$ which is a subset of $\bigsqcup_{y \in G \setminus F}\{x \in F \setminus G: [x,y] = g\}$, and for each $y \notin F$, $\{x \in F \setminus G: [x,y] = g\}$ is a set of measure 0. 
Thus, we see that $\bigsqcup_{y \in G \setminus F}\{x \in F \setminus G: [x,y] = g\}$ is also of measure 0. 
Hence, 
$$\alpha^{-1}(g) =\mu(\{(x,y) \in F^2: [x,y] = g\}). $$
Hence $\mu(\alpha^{-1}(g))$ is equal to $\mu(\alpha^{-1}(g) \cap F \times F)$.
\end{proof}

\section{The commutator probability}
Let $G$ be a finite group. 
Paul Erd\"{o}s initiated the probabilistic study of finite groups in a series of papers \cite{Er}.
One of the notions he introduced there is that of commuting probability on $G$. 

Let $\C$ denote the set of commuting pairs in $G \times G$, $\C = \{(g, h)\in G \times G: gh = hg\}$. 
The commuting probability of $G$ is then 
$$Pr(G) := \frac{|\C|}{|G \times G|} .$$
This is the probability that two randomly chosen elements of $G$ commute with each other. 

This concept has a natural generalisation to compact groups. 
Let $G$ be a compact group and $\mu$ be a normalised Haar measure on $G$, that is, we assume that $\mu(G) = 1$.
The subset $\C$ of commuting pairs in $G \times G$ is a closed subset of $G \times G$. 
Therefore it is a measurable subset of $G \times G$ and so $\mu(\C)$ makes sense. 
We then define, following \cite{Gu},
$$Pr(G) := \mu(\C) .$$
Note that, $\mu(G) = \mu(G \times G) = 1$, since $\mu$ is normalised. 

For finite groups, the notion of commuting probability has been generalised to study commutators. 
Let $g \in G$.
If $\C_g$ denotes the set $\{(a,b)\in G \times G: aba^{-1}b^{-1} = g\}$ then, following \cite{PS}, we define $Pr_G(g)$ by
$$Pr_G(g) := \frac{|\C_g|}{|G \times G|} .$$
This is the probability that two randomly chosen elements of $G$ have $g$ as their commutator. 
When $g = e$, the identity element of $G$, $Pr_G(e)$ is the commuting probability, $Pr(G)$ of $G$. 

The notion of commutator probability has a natural generalisation to compact groups.
Let $G$ be a compact group with a normalised Haar measure $\mu$ and fix a $g \in G$.
As above, let $\C_g$ denote the set $\{(a,b)\in G \times G: aba^{-1}b^{-1} = g\}$.
The set $\C_g$ is a closed subset of $G \times G$ and hence it is measurable. 
We then define $Pr_G(g)$ by 
$$Pr_G(g) = \mu(\C_g) .$$
This is the probability that two randomly chosen elements of $G$ have $g$ as their commutator. 

The commutator probability of a finite group $G$ is nicely related to complex irreducible representations of $G$, through the formula of Frobenius discussed earlier in this paper.
For $g \in G$
$$Pr_G(g) = \frac{1}{|G|}\sum_{\chi\in Irr(G)}\frac{\chi(g)}{\chi(1)} .$$

Our Theorem 1.1 gives a way to express the commutator probability of a compact group in terms of its characters. 

\begin{theorem}
Let $G$ be a connected, compact, Lie group with normalised Haar measure $\mu$.
Let $d$ denote the dimension of the group $G$ and let $r$ denote the rank of $G$. 
Let $Irr(G)_n$ denote the set of irreducible characters of $G$ whose weight-sum is less than or equal to $n$. 
Then for each $g \in G$, 
$$Pr_G(g)  = \mu\left(\alpha^{-1}(g)\right) = \lim_{n \to \infty} \frac{1}{|Irr(G)_n|^{d-r+1}} \sum_{\chi \in Irr(G)_n} \frac{\chi(g)}{\chi(1)} .$$
\end{theorem}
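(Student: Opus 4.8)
The plan is to recognise that this statement is essentially a repackaging of Theorem 1.1, so that the only genuinely new content is the identification of the commutator probability $Pr_G(g)$ with the Haar measure of the fibre $\alpha^{-1}(g)$. Accordingly, I would split the asserted chain of equalities into two links of quite different character and dispose of each separately.

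First I would treat the link $Pr_G(g) = \mu(\alpha^{-1}(g))$, which is purely a matter of unwinding definitions. By definition $Pr_G(g) = \mu(\C_g)$, where $\C_g = \{(a,b) \in G \times G : aba^{-1}b^{-1} = g\}$. Since the commutator map is $\alpha(a,b) = aba^{-1}b^{-1}$, the defining condition $aba^{-1}b^{-1} = g$ is identical to the condition $\alpha(a,b) = g$, so $\C_g$ and $\alpha^{-1}(g)$ are literally the same subset of $G \times G$. Hence $Pr_G(g) = \mu(\C_g) = \mu(\alpha^{-1}(g))$. No measure-theoretic subtlety intervenes, because $\C_g$ has already been observed to be closed, hence measurable, as a compact group is Hausdorff and the commutator map is continuous.

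For the second link, $\mu(\alpha^{-1}(g)) = \lim_{n \to \infty} |Irr(G)_n|^{-(d-r+1)} \sum_{\chi \in Irr(G)_n} \chi(g)/\chi(1)$, I would simply invoke Theorem 1.1, whose hypotheses (connected, compact, Lie, with normalised Haar measure $\mu$, dimension $d$, rank $r$) coincide exactly with those here, and which was already proved in Section 3 by separating the abelian and non-abelian cases. There is therefore no serious obstacle in the present theorem: the substantive analytic work, namely controlling the character sum via the crude bound $|\chi(g)| \leq \chi(1)$ together with the inequality $d - r > 1$ in the non-abelian case (Theorem 3.6) and the explicit torus computation with the Dirichlet-kernel bound in the abelian case (Proposition 3.7), was all carried out en route to Theorem 1.1. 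The present statement is thus a corollary whose proof consists of the definitional identity $\C_g = \alpha^{-1}(g)$ followed by a citation of Theorem 1.1, and I would expect the write-up to be correspondingly short.
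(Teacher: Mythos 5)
Your proposal matches the paper exactly: the paper offers no separate proof for this theorem, treating it as an immediate consequence of the definitional identity $\C_g = \alpha^{-1}(g)$ (so that $Pr_G(g) = \mu(\C_g) = \mu(\alpha^{-1}(g))$) combined with a citation of Theorem 1.1. Your two-link decomposition and the observation that $\C_g$ is closed, hence measurable, are precisely the content the paper relies on.
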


\end{document}